\newcommand{\bbZ}{{\Bbb Z}}
\newcommand{\bbR}{{\Bbb R}}
\newcommand{\bbN}{{\Bbb N}}
\newcommand{\bbC}{{\Bbb C}}
\newcommand{\MSD}{\widehat{\textnormal{MSD}}}
\DeclareMathOperator{\tr}{{\mathrm{tr}}}
\DeclareMathOperator{\diag}{{\mathrm{diag}}}
\newcommand{\abs}[1]{\left|#1\right|}
\newcommand{\norm}[1]{ \| #1 \|}
\renewcommand{\cite}{\citeyear}
\begin{document}

\title{The asymptotic distribution of the pathwise mean squared displacement in single particle tracking experiments
\thanks{The first author was supported in part by the prime awards no.\ W911NF--12--1--0512, Short-Term Innovative Research program, and no.\ W911NF-14-1-0475, both from the Biomathematics subdivision of the Army Research Office. The authors would like to thank John Fricks and two anonymous referees for their comments on this work.}
\thanks{{\em AMS Subject classification}. Primary: 60G18, 82B31, 62P10.}
\thanks{{\em Keywords and phrases}: anomalous diffusion,  viscoelastic fluid, mean squared displacement, fractional Brownian motion, fractional Ornstein-Uhlenbeck process, microrheology, Rosenblatt distribution.} }

\author{Gustavo Didier \\ Tulane University \and  Kui Zhang \\ Tulane University}

\bibliographystyle{agsm}

\maketitle

\begin{abstract}
Recent advances in light microscopy have spawned new research frontiers in microbiology by working around the diffraction barrier and allowing for the observation of nanometric biological structures. Microrheology is the study of the properties of complex fluids, such as those found in biology, through the dynamics of small embedded particles, typically latex beads. Statistics based on the recorded sample paths are then used by biophysicists to infer rheological properties of the fluid. In the biophysical literature, the main statistic for characterizing diffusivity is the so-named mean squared displacement ($\MSD$) of the tracer particles. Notwithstanding the central role played by the $\MSD$, its asymptotic distribution in different cases has not yet been established. In this paper, we tackle this problem. We take a pathwise approach and assume that the particle movement undergoes a Gaussian, stationary-increment stochastic process. We show that as the sample and the increment lag sizes go to infinity, the $\MSD$ displays Gaussian or non-Gaussian limiting distributions, as well as distinct convergence rates, depending on the diffusion exponent parameter. We illustrate our results analytically and computationally based on fractional Brownian motion and the (integrated) fractional Ornstein-Uhlenbeck process.  
\end{abstract}


\section{Introduction}

Abbe's diffraction limit stood for more than a hundred years as a barrier for light microscopy. The resolution limit of roughly 250nm (1nm = $10^{-9}$m) is large compared to organelles in biological cells and most nanostructures. However, in the last twenty years advances in light microscopy technology have spawned new research frontiers by allowing for the observation of nanobiological phenomena \textit{in vitro} and \textit{in vivo} up to resolutions of 10--20nm (e.g., Hell \cite{hell:2003,hell:2008}, Betzing et al \cite{betzig:etal:2006}, Rust et al \cite{rust:bates:zhuang:2006}, Hess et al \cite{hess:girirajan:mason:2006}, Westphal et al \cite{westphal:etal:2008}, Berning et al \cite{berning:willig:steffens:dibaj:hell:2012}, Jones et al \cite{jones:shim:he:zhuang:2011}, Huang et al \cite{huang:etal:2013}). Microrheology is a rapidly expanding subfield of nanobiophysics. It consists of the study of the properties of complex fluids, such as those found in biology, through the dynamics of small embedded particles, typically latex beads, tracked and recorded by means of new light microscopy technology. Microrheology is currently the dominant technique in the study of the physical properties of complex biofluids, of the rheological properties of membranes or the cytoplasm of cells, or of the entire cell (Mason and Weitz \cite{mason:weitz:1995}, Wirtz \cite{wirtz:2009}; see Didier et al.\ \cite{didier:mckinley:hill:fricks:2012} for a broad description of the statistical challenges involved).

The characterization of the diffusive behavior of nanometric particles embedded in viscous, Newtonian fluids is now well-understood both physically and probabilistically. However, in complex fluids particles are expected to display non--classical, or \textit{anomalous}, behavior due to the viscoelasticity of the fluid. As in the early analysis of diffusion, biophysicists dedicate a great deal of attention to the average distance traveled by a particle, namely, the mean squared displacement $\mu_2(t) := E X^2(t) $ (MSD), where $X(t)$ denotes the position of the particle at instant $t$. For a given time window $I$, we can express the ``local'' MSD in the form
\begin{equation}\label{e:asympt_exp}
E X^2(t) \approx \theta t^\alpha, \quad \alpha, \theta > 0, \quad t \in I,
\end{equation}
where the parameters $\theta$ and $\alpha$ are called the diffusivity coefficient and diffusion exponent, respectively. The microparticle is said to be sub-, super- or simply diffusive if the $\alpha$ is less than, greater than, or equal to 1, respectively. When $\alpha \neq 1$, the diffusion is commonly named anomalous (see O'Malley and Cushman \cite{omalley:cushman:2012} for a different perspective). The interval $I$ in \eqref{e:asympt_exp} can be of finite length or open-ended, according to the demands of physical analysis. In the former case, \eqref{e:asympt_exp} expresses transient MSD behavior, as observed in polymer physics (Rubinstein and Colby \cite{rubinstein2003polymer}, Kremer and Grest \cite{kremer1990dynamics}). Alternatively, \eqref{e:asympt_exp} describes the asymptotic MSD behavior (see the relation \eqref{e:gamma=<h^2H_0} for the accurate mathematical depiction of \eqref{e:asympt_exp} in the context of this paper).

Statistical evidence of anomalous diffusion has turned up in several contexts, including biodiffusion (Valentine et al.\ \cite{valentine:kaplan:thota:crocker:gisler:prudhomme:beck:weitz:2001}), blinking quantum dots (Brokmann et al.\ \cite{brokmann:hermier:messin:desbiolles:bouchaud:dahan:2003}, Margolin and Barkai \cite{margolin:barkai:2005}) and fluorescence studies in single-protein molecules (Kou and Xie \cite{kou:xie:2004}, Kou \cite{kou:2008}). The dominant statistical technique in the biophysical literature for estimating the diffusion exponent $\alpha$ is what we will call the sample mean squared displacement ($\MSD$). Suppose that a microrheological experiment generates a tracer bead sample path with observations $X(\Delta j)$, $j=0,1,\hdots, n$, where $\Delta \in \bbN$ stands for the sampling rate. The pathwise statistic
\begin{equation}\label{e:MSD^}
\overline{\mu}_2( \Delta h  ) := \frac{1}{n-h}\sum_{j=1}^{n-h} \left(X(\Delta (j+h) )-X(\Delta j )  \right)^2
\end{equation}
is the $\MSD$ at $h$, i.e., the statistical counterpart of $\mu_2(\cdot)$ (for notational simplicity, we do not display the dependency of $\overline{\mu}_2$ on $n$). Under (\ref{e:asympt_exp}), and assuming stationary increments, for $m$ lag values $h_1< ... <h_m$ and $m \ll n$ one hopes for ergodicity, namely,
\begin{equation}\label{eq:pathwise-msd}
\overline{\mu}_2( \Delta h_k  ) \approx EX^2(\Delta h_k), \quad k=1,\hdots,m.
\end{equation}
One then generates $(\widehat{\log \theta}, \widehat{\alpha})$ by means of the linear regression
\begin{equation}\label{e:regression}
 \log \overline{\mu}_2(\Delta h_k)  = \log \theta + \alpha \log( \Delta h_k  ) +\varepsilon_k, \quad k=1,...,m,
\end{equation}
possibly over several independent particle paths, where $\{\varepsilon_k \}_{k = 1,\hdots,m}$ is a random vector with an unspecified distribution. Plots of $\MSD$ curves as a function of the lag $h$, sometimes on a log-log scale (see Figure \ref{f:msd_plots}), are widely reported as part of diffusion analysis (e.g., Valentine et al.\ \cite{valentine:kaplan:thota:crocker:gisler:prudhomme:beck:weitz:2001}, Suh et al.\ \cite{suh:dawson:hanes:2005}, Matsui et al \cite{matsui:wagner:hill:etal:2006}, Lai et al \cite{lai:wang:cone:wirtz:hanes:2009}, Lieleg et al.\ \cite{lieleg:vladescu:ribbeck:2010}). The choice of lags $h_1,\hdots,h_m$ reflects the analyst's visual perception of the range where the slope of the MSD curves stabilize and thus indicate the true diffusive regime and power law \eqref{e:asympt_exp}.

\begin{figure}
	\begin{center}
	\includegraphics[height=2.5in,width=3in]{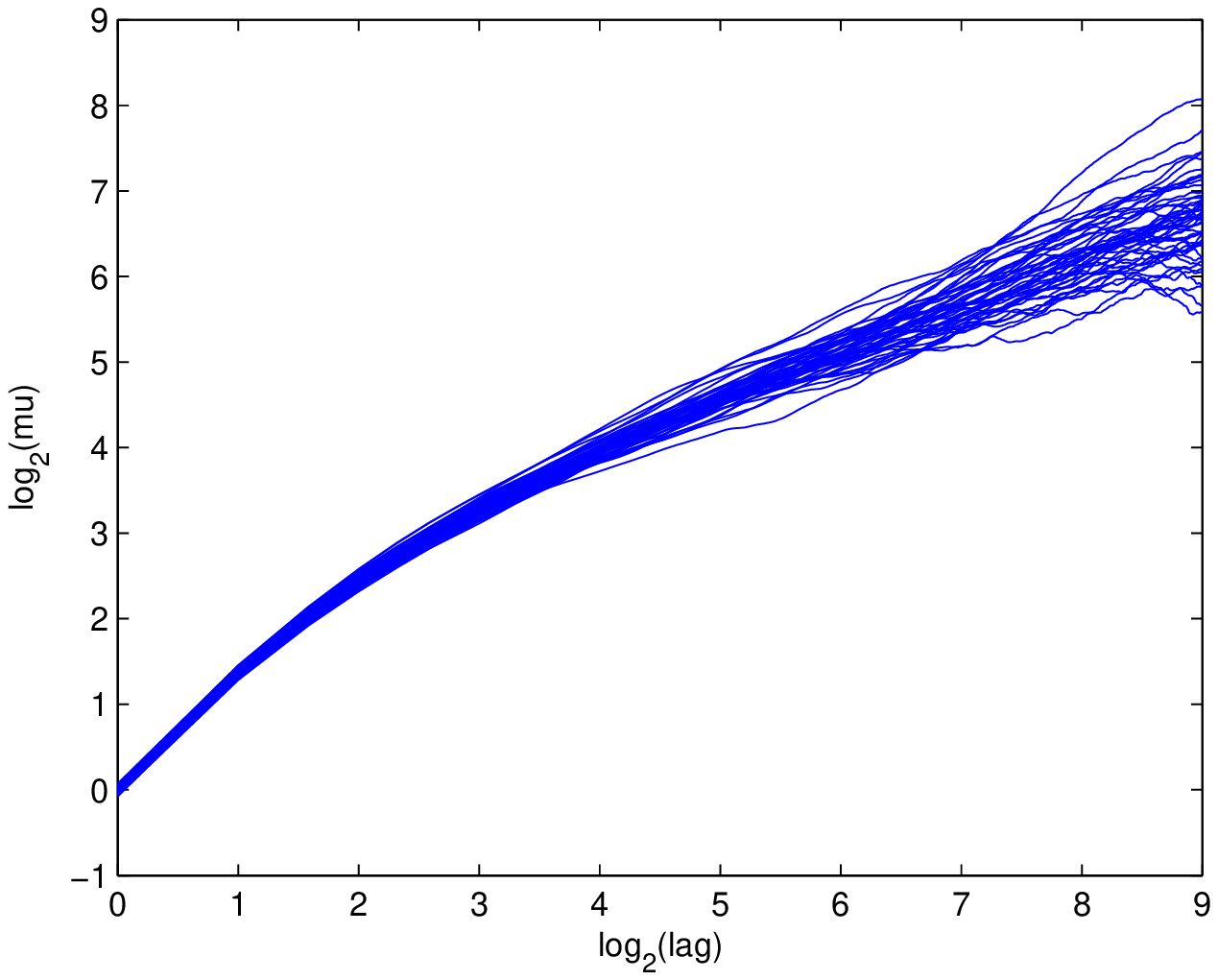} \ \includegraphics[height=2.5in,width=3in]{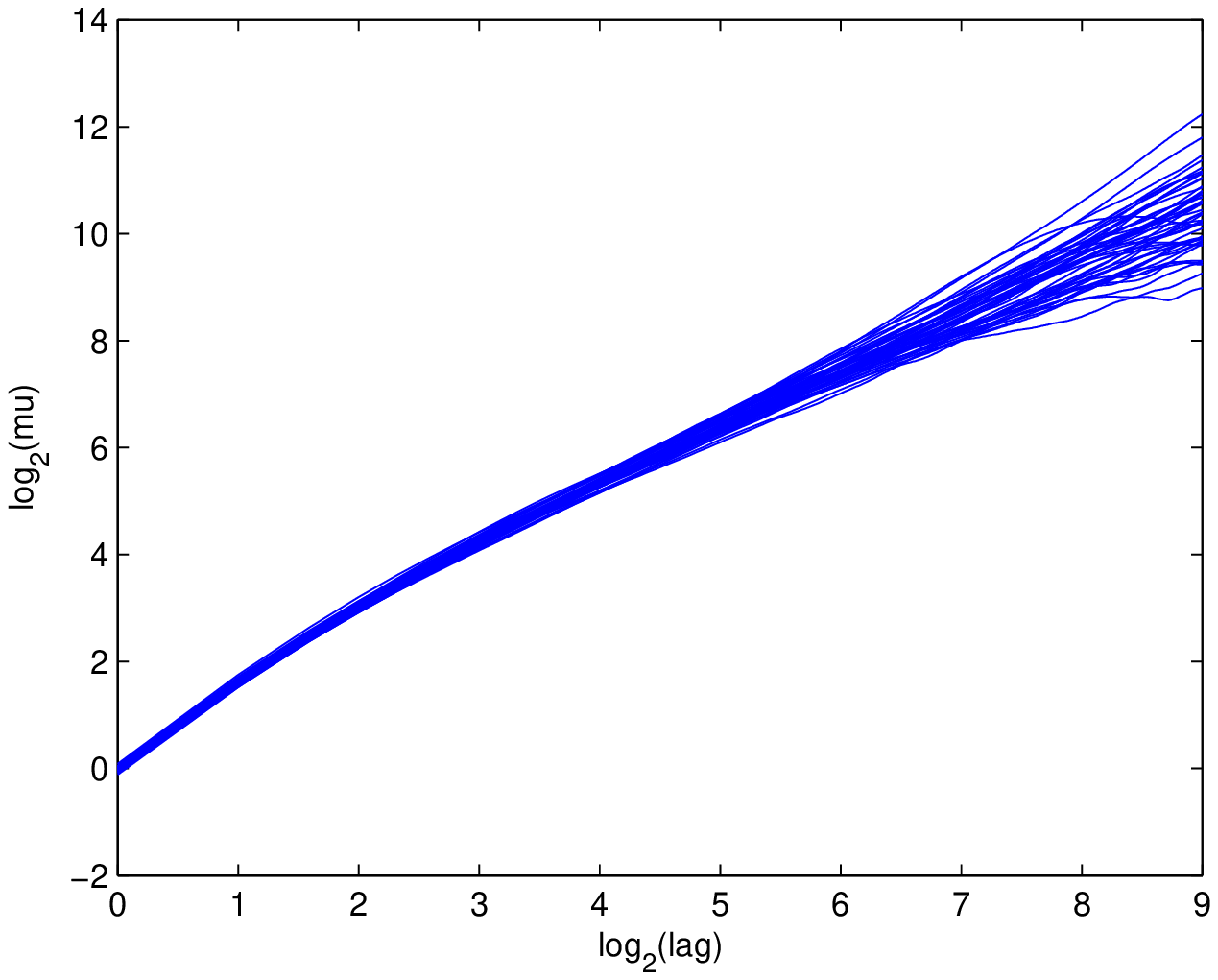}
	\caption{\label{f:msd_plots} $\MSD$ plots, each with 50 paths of size $2^{12}$ from an ifOU process with parameters $\lambda = 1$ and $H$ (see Definition \ref{def:ifOU}). Left: $H = 1/4$ (subdiffusive: $\alpha = 1/2$). Right: $H = 1/2$ (diffusive: $\alpha = 1$).
	}
	\end{center}
\end{figure}

The stochastic properties of the $\MSD$ depend on the underlying class of stochastic processes. In the review paper Meroz and Sokolov \cite{meroz:sokolov:2015}, the authors classify physical models for subdiffusive behavior according to whether one assumes the presence of binding-unbinding events, of geometrical constraints on the particle's movement, or the medium is viscoelastic. This leads to three popular families of stochastic processes, respectively, those of continuous time random walks (Metzler and Klafter \cite{metzler:klafter:2000}, Meerschaert and Scheffler \cite{meerschaert:scheffler:2004}), of random walks on fractals (Havlin and Ben-Avraham \cite{havlin:ben-avraham:1987}), and of the celebrated fractional Brownian motion (fBm; see Example \ref{ex:fBm}). In this paper, we focus on the latter family, more precisely, that of fractional, stationary increment processes (Barkai et al.\ \cite{barkai:garini:metzler:2012}, Lysy et al.\ \cite{lysy:pillai:hill:forest:mellnik:vasquez:mckinley:2014}).

The ergodicity of the $\MSD$ moments was established in Deng and Barkai \cite{deng:barkai:2009} for various families of fractional processes (see also Sokolov \cite{sokolov:2008}, Metzler et al.\ \cite{metzler:tejedor:jeon:he:deng:burov:barkai:2009}, Jeon and Metzler \cite{jeon:metzler:2010}, Burov et al.\ \cite{burov:jeon:metzler:barkai:2011}, Jeon et al.\ \cite{jeon:barkai:metzler:2013}, Sandev et al.\ \cite{sandev:metzler:tomovksi:2012}). Finite sample approximations to the distribution of the $\MSD$ under Gaussianity are provided in Grebenkov \cite{grebenkov:2011prob} (see also Qian et al.\ \cite{qian:sheetzL:elson:1991}, Grebenkov \cite{grebenkov:2011functionals}, Boyer et al.\ \cite{boyer:dean:mejia:oshanin:2012}, Andreanov and Grebenkov \cite{andreanov:grebenkov:2012}, Nandi et al.\ \cite{nandi:heinrich:lindner:2012}, Boyer et al.\ \cite{boyer:dean:mejia:oshanin:2013}). Nevertheless, so far $\MSD$-based analysis of tracking data has missed one essential feature of statistical methods, namely, the \textit{limiting distribution} of the random vector
\begin{equation}\label{e:MSD^_vector}
\Big(\overline{\mu}_2( \Delta h_1  ) ,\hdots, \overline{\mu}_2( \Delta h_m  )\Big ).
\end{equation}
The purpose of this paper is to fill this gap. We work under the assumption that the particle undergoes a Gaussian process whose stationary increments display a covariance function $\gamma$ satisfying a decay condition of the type
\begin{equation}\label{e:decay_cov_heuristic}
\gamma_{h}(z)_{1,2} \sim C z^{\alpha-2}h_{1}h_{2}, \quad z \rightarrow \infty,
\end{equation}
for some real constant $C$, where $h_{1}$, $h_{2}$ represent lag sizes (see the expressions \eqref{e:gamma_hk,hl} and \eqref{e:gamma=z2H-2_hk_hl_const+resid} for precise definitions, notation and statements). As in the particular case of fBm, such a particle is not constrained by boundaries (such as those found in a cell) or a potential.

We assume the availability of just one sample path. This models the situation in which the biophysical samples are physically \textit{heterogeneous} (Valentine et al.\ \cite{valentine:kaplan:thota:crocker:gisler:prudhomme:beck:weitz:2001}, Dawson et al.\ \cite{dawson:wirtz:hanes:2003}, Monnier et al.\ \cite{monnier:guo:mori:etal:2012}). Complex biomaterials such as mucus, or simulants such as agarose and hyaluronic acid, are expected to be heterogeneous due to the unequal distribution of chains of polymers.
Since the multiple $\MSD$ averages are formed from the same particle path, then even when $|h_2-h_1|$ is large the associated coefficients \eqref{e:MSD^} still display strong correlation (Monnier et al.\ \cite{monnier:guo:mori:etal:2012}). Our main result (Theorem \ref{t:MSD_asymptotic_dist}) shows that this yields limiting distributions and convergence rates that depend on the diffusion exponent range according to a familiar trichotomy in the literature on fractional processes. When $0 < \alpha \leq 3/2$, the asymptotic distribution is Gaussian, though the case $\alpha = 3/2$ demands a non--standard convergence rate. When $3/2 < \alpha < 2$ the convergence rate depends on the diffusion exponent and the asymptotic distribution is non--Gaussian; this reflects the classical results by M.\ Rosenblatt \cite{rosenblatt:1961} and M.\ Taqqu \cite{taqqu:1975}. This type of result is well-known for fixed sequences of Gaussian, stationary random variables, or for $p$-variations of shrinking interval size of Gaussian processes (Guyon and Le\'{o}n \cite{guyon:leon:1989}, Peltier and V\'{e}hel \cite{peltier:vehel:1994}, Hosking \cite{hosking:1996}, Bardet \cite{bardet:2000}, Buchmann and Chan \cite{buchmann:chan:2009}). By contrast, we consider the $\MSD$ statistics in the same format found in the biophysical literature, namely, we take the lag limit $h \rightarrow \infty$. Moreover, whereas the related literature on Hermitian processes and random fields often makes use of Wiener-It\^{o} chaos expansions and Malliavin calculus (e.g., Nourdin et al.\ \cite{nourdin:nualart:tudor:2010}, R\'{e}veillac et al.\ \cite{reveillac:stauch:tudor:2012}), in this work we develop our results in the style of Rosenblatt's classical arguments as to make the statements and techniques more readily available to the interested reader with a biophysical background. The asymptotic distributions provided allow for a new statistical perspective on the many numerical-experimental results reported by the biophysical community, and make it possible to mathematically compare $\MSD$-based analysis with that based on other candidate statistical techniques, e.g., in the Fourier and wavelet domains.

It should be stressed that we do not assume exact self-similarity (see relation \eqref{e:ss}). Dispensing with the latter property is important because it is often of interest to start from a Newtonian instance, such as the generalized Langevin equation (GLE; e.g., Lysy et al.\ \cite{lysy:pillai:hill:forest:mellnik:vasquez:mckinley:2014}, p.6), to arrive at an anomalous diffusion model that displays non-fractional short range behavior. In particular, we show that our results encompass the stationary-increment process induced by a fractional Ornstein-Uhlenbeck (fOU) velocity process (see Definition \ref{def:ifOU}). The latter can be regarded as a spectrally simplified model for fractional instances of the GLE (see \eqref{e:specdens_fOU}).

The paper is divided as follows. Section \ref{s:assumptions} contains most definitions and the assumptions used throughout the paper. We also shed light on the proposed assumptions by showing that they imply the properties \eqref{e:asympt_exp} and \eqref{e:decay_cov_heuristic}. In Section \ref{s:MSD_distribution}, we state and discuss weak limits for the $\MSD$. Furthermore, Monte Carlo experiments are used to illustrate the Gaussian or non-Gaussian nature of the $\MSD$ distribution, and to study the quality of the asymptotic approximation. All proofs can be found in the Appendix.

\section{Preliminaries and assumptions}\label{s:assumptions}

All through the paper, $C$ is used in bounds to denote a constant that does not depend on the sample size $n$, and which may change from one line to another. For two sequences of real numbers $\{a_n\}_{n \in \bbN}$, $\{b_n\}_{n \in \bbN}$, the expression $a_n \sim b_n$ means that $\frac{a_n}{b_n} \rightarrow 1$ as $n \rightarrow \infty$.

Recall that a stochastic process $X$ is said to have stationary increments when $\{X(t + h) - X(h)\}_{t \in \bbR}$ has the same finite-dimensional distributions for any time shift $h \in \bbR$. The stochastic process in \eqref{e:asympt_exp} is assumed to satisfy the following condition.\\

\medskip

\noindent {\sc Assumption (A1)}: $X = \{X(t)\}_{t \in \bbR}$ is a Gaussian, stationary-increment process with harmonizable representation
\begin{equation}\label{x_spec_rep}
X(t)=C_{\alpha} \int_{\bbR} \frac{e^{itx}-1}{ix}  \frac{s(x)}{|x|^{\alpha/2-1/2}} \widetilde{B}(dx),
\end{equation}
where $\alpha\in (0,2)$, $C_{\alpha} \neq 0$, $\widetilde{B}(dx)$ is a $\bbC$-valued Brownian measure such that $\widetilde{B}(-dx)=\overline{\widetilde{B}(dx)}$, $E\widetilde{B}(dx)\overline{\widetilde{B}(dx')}=0$ $(x\neq x')$, and $E |\widetilde{B}(dx)|^2=dx$. The function $s(x)$ is a bounded and complex-valued function with $\abs{s(0)}^2 = 1$, and
\begin{equation}\label{e:s(x)}
|\abs{s(x)}^2-1|\leq C_0 x^{\delta_0},\quad x\in (-\varepsilon_0,\varepsilon_0),
\end{equation}
for constants $C_0,\delta_0,\varepsilon_0>0$.

\medskip

\noindent In particular, the representation \eqref{x_spec_rep} implies that $EX(t) = 0$, $t \in \bbR$. However, this is inconsequential for modeling, since one can always assume that a single diffusing particle starts at zero. In turn, the condition \eqref{e:s(x)} is mild (c.f.\ Moulines et al.\ \cite{moulines:roueff:taqqu:2007fractals}, p.302, relation (4)) and plays a technical role in the proof of Proposition \ref{prop:assumption_regu_shortmemo} below.

\begin{example}\label{ex:fBm}
FBm is the only Gaussian, self-similar, stationary-increment process (Taqqu \cite{taqqu:2003}, Proposition 2.3). The self-similarity of the fBm $B_H = \{B_H(t)\}_{t \in \bbR}$ means that, for a Hurst parameter $0 < H \leq 1$, the scaling relation
\begin{equation}\label{e:ss}
\{B_{H}(ct)\}_{t \in \bbR} \stackrel{{\mathcal L}}= \{c^{H} B_{H}(t)\}_{t \in \bbR}, \quad c > 0,
\end{equation}
is satisfied. FBm has mean zero, and by Gaussianity, it is characterized by its closed-form covariance function
\begin{equation}\label{e:fBm_cov}
EB_{H}(s)B_H(t) = \frac{\sigma^2}{2} \{|t|^{2H} + |s|^{2H} - |t-s|^{2H}\}, \quad s,t \in \bbR.
\end{equation}
In particular, when $\sigma^2 = 1$, we call $B_H$ a standard fBm. Moreover, by taking $s = t$ in \eqref{e:fBm_cov}, the expression \eqref{e:asympt_exp} holds at all $t$ as an equality with
\begin{equation}\label{e:alpha}
\alpha = 2H.
\end{equation}
When $H$ is less than, greater than or equal to 1/2, fBm is sub-, super- or simply diffusive (Brownian motion), respectively. The harmonizable representation of a standard fBm is given by
\begin{equation}\label{e:standard_fBm_harmonizable}
X(t)= C_H \int_{\bbR} \frac{e^{itx}-1}{ix} \frac{1}{|x|^{H-1/2}} \widetilde{B}(dx),
\end{equation}
where
\begin{equation}\label{e:CH_standard_fBm}
C_H = \sqrt{\pi^{-1} H \Gamma(2H)\sin(H\pi)}
\end{equation}
(see Taqqu \cite{taqqu:2003}, p.31, expression (9.8)). Thus, fBm satisfies (A1) with $C_\alpha = C_H$ and $s(x)=1$.
\end{example}

Let $\Delta = 1$ in \eqref{e:MSD^}. We assume that an experiment produced \textit{one }sequential sample $X_1,\hdots,X_n$, $n \in \bbN$, of observations from the stochastic process $X$. For $h\in\bbN$, let
\begin{equation}\label{e:hk}
\{h_k:=w_kh\}_{k=1,...,m}
\end{equation}
be distinct integer-valued increment sizes, where $h_m\leq n-1$, $w_1 < w_2 < \hdots < w_m$ and $m\leq n-1$. We can define an associated vector of increments
\begin{equation}\label{e:Y_increments}
{\mathbf Y}=(Y_1(h_1),\hdots,Y_{n-h_1}(h_1);Y_1(h_2),\hdots,Y_{n-h_2}(h_2);\hdots;Y_1(h_m),\hdots,Y_{n-h_m}(h_m))^T,
\end{equation}
where
$$
Y_i(h_k) := X(i + h_k) - X(i), \quad h_k \in \bbN \cup \{0\}, \quad k = 1,\hdots,m.
$$
Since $X$ is a stationary-increment process, then the cross product
$$
EY_{j+z}(h_{k_1})Y_j(h_{k_2})=E(X(j+z + h_{k_1}) - X(j+z))(X(j + h_{k_2}) - X(j))
$$
$$
=E(X(z + h_{k_1}) - X(z))(X(h_{k_2}) - X(0))=E Y_{z}(h_{k_1})Y_{0}(h_{k_2})
$$
is not a function of $j$. Denote the covariance matrix of the increments by $\Sigma_{{\mathbf Y}}(h_1,\hdots,h_m)$, where an entry has the form
\begin{equation}\label{e:gamma_hk,hl}
\gamma_{h}(z)_{k_1,k_2}=EY_{j+z}(h_{k_1})Y_j(h_{k_2}),
\end{equation}
for $k_1,k_2=1,\hdots,m$. Note that \eqref{e:gamma_hk,hl} satisfies the symmetry relation
$$
\gamma_{h}(-z+ h_{k_2} - h_{k_1})_{k_1,k_2} = \gamma_{h}(z)_{k_1,k_2}, \quad z \in \bbZ.
$$
The self-similarity of fBm (see \eqref{e:ss}) makes the asymptotic distribution of the $\MSD$ much simpler to establish (see Peltier and V\'{e}hel \cite{peltier:vehel:1994}, Proposition 4.2). Since we do not assume self-similarity, as in the biophysical literature we need to make the size of the lags themselves go to infinity, though slower than the sample size $n$. This mathematically expresses what biophysicists do in practice: $h$ has to be large enough for the $\MSD$ regime to become linear, but at the same time cannot be too large because of the increased variance of the $\MSD$. This is illustrated in Figure \ref{f:msd_plots} and accurately described in assumption (A2), stated next.

\medskip

\noindent {\sc Assumption (A2)}: for $h=h(n) \in \bbN \cup \{0\}$, $n\in\bbN$,
\begin{equation}\label{e:h(n)}
\frac{h(n) \log^2(n)}{n}+\frac{n}{h(n)^{1+\delta/2}}\rightarrow 0,\quad n\rightarrow \infty,\\
\end{equation}
where
\begin{equation}\label{e:delta}
\delta=\min(\alpha/2,\delta_0/2)
\end{equation}
(see \eqref{x_spec_rep} and \eqref{e:s(x)} for the definitions of $\alpha$ and $\delta_0$).
\medskip

As anticipated in the Introduction, the expressions \eqref{e:gamma=<h^2H_0} and \eqref{e:gamma=z2H-2_hk_hl_const+resid} in the following proposition give exact mathematical meaning to the heuristic properties \eqref{e:asympt_exp} and \eqref{e:decay_cov_heuristic}.

\begin{proposition}\label{prop:assumption_regu_shortmemo}
Suppose the assumptions (A1) and (A2) hold.
\begin{itemize}
  \item [(i)] Then, there is a constant $\theta >0$ such that
\begin{equation}\label{e:gamma=<h^2H_0}
 \abs{\frac{E X^2(h)}{\theta h^{\alpha}}-1} \leq Ch^{-\delta},\quad n\rightarrow \infty,
\end{equation}
where $\delta>0$ is given by \eqref{e:delta};
  \item [(ii)] moreover, for any $k_1,k_2=1,...,m$ and $\gamma_{h}(z)_{k_1,k_2}$ as in \eqref{e:gamma_hk,hl}, \begin{equation}\label{e:gamma=z2H-2_hk_hl_const+resid}
\gamma_{h}(z)_{k_1,k_2}=|z|^{\alpha-2}h^2w_{k_1} w_{k_2} \{\tau + g(z,h )_{k_1,k_2} \}, \quad n\rightarrow \infty,
\end{equation}
\begin{equation}\label{e:hm+1<=z<=n}
\quad h_{m} + 1\leq |z|\leq n,
\end{equation}
where
\begin{equation}\label{e:tau}
\tau=\tau(\alpha)=\Big( \frac{C_\alpha}{C_H}\Big)^2 \frac{\alpha(\alpha-1)}{2},
\end{equation}
$C_H$ is given by \eqref{e:CH_standard_fBm}, and the residual function $g(\cdot,\cdot)_{k_1,k_2}$ satisfies
\begin{equation}\label{e:residual_function_g}
|g(z,h)_{k_1,k_2}|\leq C \Big( \frac{h}{|z|} \Big)^{\delta},\quad 0 < h \leq |z|.
\end{equation}
\end{itemize}
\end{proposition}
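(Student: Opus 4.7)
My plan is to exploit the spectral isometry granted by the harmonizable representation \eqref{x_spec_rep} and reduce both statements to two ingredients: (a) an exactly computable fractional Brownian motion ``skeleton'' that supplies the leading constants $\theta$ and $\tau$, and (b) estimates on the residuals coming from $|s(x)|^2-1$ and from Taylor remainders of complex exponentials.

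For part (i), I would write
\begin{equation*}
E X^2(h) = C_\alpha^2 \int_{\bbR} \frac{4\sin^2(hx/2)}{x^2}\,\frac{|s(x)|^2}{|x|^{\alpha-1}}\,dx
\end{equation*}
and change variables $u=hx$ to factor $h^\alpha$ out. The ``fBm part'' corresponds to replacing $|s(u/h)|^2$ by $1$ in the integrand; comparing with the standard fBm of Hurst index $H=\alpha/2$, for which $EB_H^2(h)=h^{2H}$ holds exactly while the harmonizable kernel uses $C_H$, identifies the leading constant as $\theta=(C_\alpha/C_H)^2$. The residual is
\begin{equation*}
C_\alpha^2 h^\alpha \int_{\bbR} \frac{4\sin^2(u/2)}{|u|^{\alpha+1}}\bigl(|s(u/h)|^2-1\bigr)\,du.
\end{equation*}
Splitting at $|u|=\varepsilon_0 h$, the low-frequency piece is bounded by $C h^{-\delta_0}\int_{\bbR}\sin^2(u/2)|u|^{\delta_0-\alpha-1}\,du$ using \eqref{e:s(x)}, while on $|u|>\varepsilon_0 h$ the boundedness of $s$ and the tail estimate $\int_{|u|>\varepsilon_0 h}|u|^{-\alpha-1}du=O(h^{-\alpha})$ give a contribution of order $h^{-\alpha}$. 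Because $\delta\le \alpha/2$, both bounds are absorbed into $C h^{-\delta}$.

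For part (ii), I would use the same isometry to express
\begin{equation*}
\gamma_h(z)_{k_1,k_2} = C_\alpha^2 \int_{\bbR} e^{izx}\,\frac{(e^{ih_{k_1}x}-1)(e^{-ih_{k_2}x}-1)}{x^2}\,\frac{|s(x)|^2}{|x|^{\alpha-1}}\,dx,
\end{equation*}
substitute $u=|z|x$ with $\eta_j = h_{k_j}/|z| = h w_{k_j}/|z|$, and collect a prefactor of $|z|^{\alpha}$. The Taylor identity $(e^{i\eta_1 u}-1)(e^{-i\eta_2 u}-1)=\eta_1\eta_2 u^2 + R(u,\eta_1,\eta_2)$ supplies the leading piece, giving $|z|^{\alpha-2}h^2 w_{k_1}w_{k_2}$ times an integral whose $s\equiv 1$ value is exactly $\tau$. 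I would identify $\tau$ by the same fBm comparison used in (i): second-order Taylor expansion of the closed-form increment covariance derived from \eqref{e:fBm_cov} yields $\sigma^2 H(2H-1) z^{2H-2}h_{k_1}h_{k_2}$, and rescaling by $(C_\alpha/C_H)^2$ produces formula \eqref{e:tau}.

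The main technical obstacle, and where I expect the argument to be delicate, is bounding the remainder $g(z,h)_{k_1,k_2}$ uniformly to the rate $(h/|z|)^\delta$ required by \eqref{e:residual_function_g}. Two sources contribute: the Taylor remainder $R(u,\eta_1,\eta_2)$, and the perturbation $|s(u/|z|)|^2-1$. I would split the $u$-integration at $|u|=1/\max(\eta_1,\eta_2)$. In the inner region one has $|R|\le C(\eta u)^3$, producing an integrand bounded by $\eta^3|u|^{2-\alpha}$, whose integral up to $1/\eta$ is of order $\eta^{\alpha}$, hence $(h/|z|)^{\alpha}$; in the outer region the factors $e^{i\eta_j u}-1$ are controlled by $2$, and the $|u|^{-\alpha-1}$ tail integrates to order $\eta^{\alpha}$ as well. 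The spectral correction term is handled by the same low/high-frequency split used in part (i), producing an error of order $|z|^{-\delta_0}$. Taking the worse of these two rates and recalling $\delta=\min(\alpha/2,\delta_0/2)$ yields the stated bound, while using assumption (A2) ensures that in the relevant range \eqref{e:hm+1<=z<=n} all approximations remain in the regime $\eta_j\to 0$.
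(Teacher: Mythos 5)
Your part (i) follows essentially the paper's route (harmonizable isometry, rescaling $u=hx$, comparison with standard fBm to get $\theta=(C_\alpha/C_H)^2$, and a low/high frequency split to control $|s|^2-1$). One small caveat: your low-frequency bound $Ch^{-\delta_0}\int_{\bbR}\sin^2(u/2)\,|u|^{\delta_0-\alpha-1}\,du$ quotes an integral that diverges whenever $\delta_0\ge\alpha$; the paper splits at $|y|=\sqrt{h}$ instead of at $\varepsilon_0 h$ precisely to avoid this (which is also why $\delta=\min(\alpha/2,\delta_0/2)$ carries the factors $1/2$). This is minor and fixable.

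Part (ii) has a genuine gap. Writing $\eta_j=h_{k_j}/|z|$, you must show that the absolute error in $\int_{\bbR}e^{iu}(e^{i\eta_1 u}-1)(e^{-i\eta_2 u}-1)|u|^{-\alpha-1}\,du$ relative to its leading value is $O(\eta^{2+\delta})$, because the leading value is itself of order $\eta_1\eta_2=O(\eta^2)$ and $g$ in \eqref{e:residual_function_g} is a \emph{relative} error. Your estimates deliver only $O(\eta^{\alpha})$: the inner-region bound $\eta^3\int_0^{1/\eta}u^{2-\alpha}\,du=C\eta^{\alpha}$ and the outer-region tail $\int_{|u|>1/\eta}|u|^{-\alpha-1}\,du=C\eta^{\alpha}$ both \emph{dominate} the main term for every $\alpha<2$, so after normalizing by $|z|^{\alpha-2}h_{k_1}h_{k_2}$ they yield $|g|\le C(h/|z|)^{\alpha-2}\rightarrow\infty$ rather than $C(h/|z|)^{\delta}$. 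The loss comes from taking absolute values, which discards the oscillatory cancellation in $e^{iu}$; relatedly, your ``leading piece'' $\eta_1\eta_2\int_{\bbR}e^{iu}|u|^{1-\alpha}\,du$ is not absolutely convergent (and for $\alpha\le 1$ not even convergent as an improper integral), so the decomposition cannot be estimated termwise in modulus. The paper sidesteps all of this: it recognizes the $s\equiv 1$ integral as \emph{exactly} the covariance of standard fBm increments, substitutes the closed form \eqref{e:fBm_cov}, and applies a second-order Taylor expansion of $x\mapsto x^{\alpha}$ to the resulting second-order finite difference, where the cancellation is explicit and gives relative error $O(h/|z|)$; the spectral perturbation contributes $Ch^{-\delta}(|z|/h)^{2-\alpha}$, which assumption (A2) together with $|z|\le n$ reduces to $C(h/|z|)^{\delta}$. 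Since you already invoke the fBm closed-form comparison to identify $\tau$, you should run the entire error analysis through that time-domain expression rather than through frequency-domain Taylor remainders.
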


Besides fBm, another model for anomalous diffusion used in this work is what we call the integrated fractional Ornstein-Uhlenbeck (ifOU) process. One major difference between fBm and the ifOU process is that the latter is not exactly self-similar. The ifOU is an example of a fractional process whose $\MSD$ asymptotics are naturally studied under the assumption (A2).

To define the ifOU process, recall that the fractional Ornstein-Uhlenbeck process (fOU) is the a.s.\ continuous solution to the fBm-driven Langevin equation
\begin{equation} \label{e:fOU_SDE}
dV(t) = - \lambda V(t) dt + \sigma dB_{H}(t), \quad t \geq 0,\quad \lambda>0,\quad \quad 0 < H < 1
\end{equation}
(Rao \cite{prakasarao:2010}, p.78). The a.s.\ continuous process
$$
V(t) = \sigma \int^{t}_{-\infty} e^{- \lambda(t-u)}dB_H(u), \quad t > 0,
$$
solves \eqref{e:fOU_SDE} with initial condition $V(0)$ defined by the same integral. When $H = 1/2$, the solution is the classical Ornstein-Uhlenbeck process. We are interested in the stationary-increment counterpart of the fOU process, as put forward in the next definition.

\begin{definition}\label{def:ifOU}
Given a fOU process $\{V(t)\}_{t \geq 0}$, the associated ifOU process is given by
\begin{equation}\label{e:ifOU_def}
X(t) = \int^{t}_{0}V(s) ds, \quad t > 0.
\end{equation}
The integrand in \eqref{e:ifOU_def} is a version of $V$ with continuous paths (see Didier and Fricks \cite{didier:fricks:2014}, p.719, Lemma A.4).
\end{definition}

The ifOU is a simple parametric model for anomalous diffusion. This can be seen in the Fourier domain, based on the harmonizable representation
\begin{equation}\label{eqn:spec_int_fOU}
X(t) = \sigma \sqrt{\Gamma(2H+1) \sin(\pi H)} \int_{\bbR} \frac{e^{itx}-1}{ix} \frac{1}{\lambda + i x} \frac{1}{|x|^{H-1/2}} \widetilde{B}(dx).
\end{equation}
The spectral density
\begin{equation}\label{e:specdens_fOU}
f_X(x) = \sigma^2 \Gamma(2H+1) \sin(\pi H) \frac{1}{\lambda^2 + x^2} \frac{1}{|x|^{2H-1}}, \quad x \in \bbR \backslash\{0\},
\end{equation}
exhibits the short range dependence term $(\lambda^2 + x^2)^{-1}$ besides the fractional term $|x|^{1-2H}$, with a tuning parameter $\lambda$.

Let $H \in (0,1)\backslash\{1/2\}$. By \eqref{e:specdens_fOU} and dominated convergence, the covariance function $\gamma(s) =  \textnormal{Cov}(V(t),V(t+s))$ of $V$ is continuous. Furthermore, in Cheridito et al.\ \cite{cheridito:kawaguchi:maejima:2003}, p.8, it is shown that
\begin{equation}\label{e:cov_fOU_velocity}
\gamma(s) = \frac{\sigma^2}{2} \sum^{N}_{n=1} \lambda^{-2n} \Big( \prod^{2n-1}_{k=0}(2H-k)\Big)s^{2H-2n} + O(s^{2H-2N-2}),
\end{equation}
for an arbitrary $N \in \bbN$, where the remainder is taken with respect to $s \rightarrow \infty$. Therefore,
\begin{equation}\label{e:|E X(t_1)X(t_2)|}
|E X(t_1)X(t_2)| \leq \int^{t_1}_{0} \int^{t_2}_{0} |\gamma(s_1 - s_2)| ds_1 ds_2 < \infty,
\end{equation}
whence the integral \eqref{e:ifOU_def} is also well-defined in the mean squared sense (see Cram\'{e}r and Leadbetter \cite{cramer:leadbetter:1967}, p.86). By \eqref{eqn:spec_int_fOU}, like fBm the ifOU also satisfies (A1) and the conclusions of Proposition \ref{prop:assumption_regu_shortmemo} apply, where the relation between $\alpha$ and $H$ is again given by \eqref{e:alpha}. Note that when the ifOU is simply diffusive ($H = 1/2$, or $\alpha = 1$), the expression \eqref{e:gamma=z2H-2_hk_hl_const+resid} holds with $\tau = 0$.

\section{The asymptotic distribution of MSD--based anomalous diffusion parameter estimators}\label{s:MSD_distribution}

The following theorem is the main result of this paper. It gives the asymptotic distribution of a random vector of $\MSD$ entries at different lag values, according to subranges of the diffusion exponent $\alpha$. For the theorem, recall that in the notation \eqref{e:Y_increments}, the $\MSD$ at a given lag value is given by
\begin{equation}\label{e:mu2(hk)}
\overline{\mu}_2(h_k) = \frac{1}{N_k}\sum^{N_k}_{i=1}Y^{2}_i(h_k),
\end{equation}
where
\begin{equation}\label{e:Nk}
N_k := n- h_k, \quad k = 1, \hdots,m.
\end{equation}

\begin{theorem}\label{t:MSD_asymptotic_dist}
Suppose the assumptions (A1) and (A2) hold. Let $m \in \bbN$ be the chosen number of $\MSD$ lag values, and let $N_k$ be as in \eqref{e:Nk}, $k = 1,\hdots,m$. Then,
\begin{equation}\label{e:MSD_asymptotic_dist}
\bigg( \frac{N_k}{\eta(N_k)\zeta(h_k)}(\overline{\mu}_2(h_k)-EX^2(h_k))\bigg)_{k=1,\hdots,m}\stackrel{d}{\rightarrow} {\mathbf{Z}}, \quad n \rightarrow \infty,
\end{equation}
where
\begin{equation}\label{e:eta_zeta}
\left\{\begin{array}{ccc}
0 < \alpha < 3/2: & \eta(n) = \sqrt{n},\, \zeta(h)=h^{\alpha+1/2} ;\\
\alpha = 3/2: & \eta(n) = \sqrt{n\log(n)},\, \zeta(h)=h^{2};\\
3/2 < \alpha < 2: & \eta(n) = n^{\alpha-1},\, \zeta(h)=h^{2}.
\end{array}\right.
\end{equation}
In \eqref{e:MSD_asymptotic_dist},
\begin{itemize}
\item [(i)] if $0 < \alpha < 3/2$, then ${\mathbf{Z}} \sim N(0,\Sigma)$, where the entry $k_1,k_2$ of the matrix $\Sigma = \Sigma(\alpha)$ is given by
\begin{equation}\label{e:Sigma_0<H<3/4}
\Sigma_{k_1, k_2} = 2 w_{k_1}^{-\alpha-1/2}w_{k_2}^{-\alpha-1/2} \Big(\frac{C_\alpha}{C_H}\Big)^4 \norm{\widehat{G}(y;w_{k_1},w_{k_2})}^2_{L^2(\bbR)}, \quad k_1,k_2 = 1,\hdots,m,
\end{equation}
\begin{equation}\label{e:G_hat}
\widehat{G}(y;w_{k_1},w_{k_2})=C^2_H \frac{(e^{iw_{k_1} y}-1)(e^{-iw_{k_2}y}-1)}{\abs{y}^{\alpha+1}},
\end{equation}
and $C_H$ is given in \eqref{e:CH_standard_fBm};
\item [(ii)] if $\alpha = 3/2$, then ${\mathbf{Z}} \sim N(0,\Sigma)$, where the entry $k_1,k_2$ of the matrix $\Sigma= \Sigma(\alpha)$ is given by
\begin{equation}\label{e:Sigma_H=3/4}
\Sigma_{k_1,k_2}=4\tau^2, \quad k_1,k_2 = 1,\hdots,m.
\end{equation}

\item [(iii)] if $3/2 < \alpha < 2$, $\mathbf{Z}$ follows a multivariate Rosenblatt-type distribution whose characteristic function is
\begin{equation}\label{e:3/4<H<1_multivariate_Rosen_limit}
    \phi_{\mathbf{Z}}({\mathbf{t}})=
        \exp\bigg\{\frac{1}{2}\sum_{s=2}^{\infty} \frac{[ 2i \tau \hspace{0.5mm}\sum^{m}_{k=1}t_k ]^s}{s}\hspace{1mm} c_s\bigg\},
\end{equation}
where, for $s \geq 2$,
\begin{equation}\label{e:cs}
    c_s = c_s(\alpha) = \int_0^1 \int_0^1 \cdots\int_0^1 |x_1-x_2|^{\alpha-2}|x_2-x_3|^{\alpha-2}\cdots|x_s-x_1|^{\alpha-2}dx_1dx_2\cdots dx_s.
\end{equation}
\end{itemize}
\end{theorem}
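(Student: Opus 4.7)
The plan is to recognize each centered sum $S_n(h_k) := N_k(\overline{\mu}_2(h_k)-EX^2(h_k)) = \sum_{i=1}^{N_k}(Y_i^2(h_k)-EY_i^2(h_k))$ as a centered quadratic form in the jointly Gaussian vector $\mathbf{Y}$, and then invoke the method of cumulants. By the Cram\'er-Wold device, weak convergence of the vector in \eqref{e:MSD_asymptotic_dist} reduces to convergence in distribution of the univariate
$$
T_n := \sum_{k=1}^m \frac{a_k}{\eta(N_k)\zeta(h_k)} S_n(h_k) = \mathbf{Y}^T A_n \mathbf{Y} - E\mathbf{Y}^T A_n \mathbf{Y},
$$
for every $\mathbf{a}\in\bbR^m$, where $A_n$ is block-diagonal with $k$-th block $\frac{a_k}{\eta(N_k)\zeta(h_k)} I_{N_k}$. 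Since $\mathbf{Y}$ is Gaussian, the classical cumulant identity for centered quadratic forms gives $\kappa_s(T_n) = 2^{s-1}(s-1)!\,\tr(A_n\Sigma_\mathbf{Y})^s$, so everything reduces to the asymptotic evaluation of these traces.

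Expanding, one obtains
$$
\tr(A_n\Sigma_\mathbf{Y})^s = \sum_{k_1,\ldots,k_s=1}^m \frac{a_{k_1}\cdots a_{k_s}}{\prod_j \eta(N_{k_j})\zeta(h_{k_j})} \sum_{i_1,\ldots,i_s} \gamma_h(i_1-i_2)_{k_1,k_2}\cdots\gamma_h(i_s-i_1)_{k_s,k_1}.
$$
Using Proposition \ref{prop:assumption_regu_shortmemo}(ii), I would split each inner sum into a long-range part (where every $|i_j-i_{j+1}|>h_m$, so $\gamma_h(z)_{k,l}\approx\tau h^2 w_k w_l |z|^{\alpha-2}$ applies) and a short-range complement, controlled by the uniform bound $|\gamma_h(z)_{k,l}|\leq Ch^\alpha$. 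Assumption (A2) is tuned exactly so that the short-range complement, as well as the residual $g(z,h)_{k_1,k_2}$ of \eqref{e:residual_function_g}, is asymptotically negligible. After the rescaling $x_j=i_j/N$, the long-range part becomes a Riemann sum of $\prod_j|x_j-x_{j+1}|^{\alpha-2}$ on $[0,1]^s$. The critical $\alpha=3/2$ dichotomy arises because the $s=2$ integrand $|x_1-x_2|^{2\alpha-4}$ is integrable iff $\alpha<3/2$, which is the same as $\sum_z |z|^{2\alpha-4}<\infty$.

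For case (iii), $3/2<\alpha<2$, every cumulant contributes and one finds $\kappa_s(T_n)\to 2^{s-1}(s-1)!\,\tau^s c_s\bigl(\sum_k a_k\bigr)^s$; reassembling via $\log\phi_{T}(\lambda)=\sum_s\kappa_s(T)(i\lambda)^s/s!$ reproduces \eqref{e:3/4<H<1_multivariate_Rosen_limit} and forces $\mathbf{Z}$ to be a degenerate rank-one multivariate Rosenblatt vector (all components proportional). For case (i), $0<\alpha<3/2$, only $\kappa_2(T_n)$ persists: I would switch to the spectral representation \eqref{x_spec_rep}, writing $\gamma_h(z)_{k_1,k_2}=C_\alpha^2 h^\alpha\int e^{-iyz/h}\frac{(e^{iyw_{k_1}}-1)(e^{-iyw_{k_2}}-1)}{|y|^{\alpha+1}}|s(y/h)|^2\,dy$, and applying a Plancherel identity in $z$ to obtain $\sum_z \gamma_h(z)_{k_1,k_2}^2\sim h^{2\alpha+1}\cdot 2\pi\,\norm{\widehat G(\cdot;w_{k_1},w_{k_2})}^2_{L^2}$, which after the normalization yields \eqref{e:Sigma_0<H<3/4}; cumulants of order $s\geq 3$ are $o(1)$ via the Schatten bound $|\tr(A_n\Sigma_\mathbf{Y})^s|\leq \|A_n\Sigma_\mathbf{Y}\|_{\mathrm{op}}^{s-2}\tr(A_n\Sigma_\mathbf{Y})^2$ combined with summability of $\sum_z\gamma_h(z)^2$. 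Case (ii), $\alpha=3/2$, is the boundary: the $s=2$ sum $\sum_{|z|\leq N}|z|^{-1}\sim 2\log N$ produces the logarithmic factor in $\eta(N)$, and the cross-covariances collapse to the constant \eqref{e:Sigma_H=3/4} since only the $\tau$-leading behavior survives; the integrals $\int_{[0,1]^s}\prod|x_j-x_{j+1}|^{-1/2}$ remain finite for $s\geq 3$ without extra logarithms, so those cumulants still vanish after the $(\log N)^{s/2}$ normalization.

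The principal obstacle will be the uniform control of the trace approximations when any subset of indices $i_1,\ldots,i_s$ clusters: one must isolate the Riemann-sum bulk from diagonal and short-range zones, uniformly in the $m^s$ lag-label combinations, while absorbing the short-range spectral perturbation $|s(x)|^2-1$ from \eqref{e:s(x)} into the error. A secondary delicate point is the boundary case $\alpha=3/2$, where the logarithm must appear in $\kappa_2$ alone, with no leakage into the higher cumulants after the $(\log N)^{s/2}$ renormalization; this demands a slightly sharper variance computation than in the other regimes.
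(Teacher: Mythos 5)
Your proposal follows essentially the same route as the paper: the cumulant identity $\kappa_s(T_n)=2^{s-1}(s-1)!\,\tr(A_n\Sigma_{\mathbf Y})^s$ is exactly the paper's eigenvalue expansion of $\log\phi_{{\mathbf Z}_n}$ in \eqref{e:char_z_n}--\eqref{eqn:trace}, and your subsequent splitting of the trace sums into near-diagonal and long-range zones via Proposition \ref{prop:assumption_regu_shortmemo}(ii), with Riemann sums giving $c_s$ for $3/2<\alpha<2$, Parseval giving \eqref{e:Sigma_0<H<3/4} for $\alpha<3/2$, and $\sum_z z^{-1}\sim\log n$ at $\alpha=3/2$, is precisely the content of Lemmas \ref{lem:3/4<H<1_neardiag_h}--\ref{lem:0<H<3/4_s=2}. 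The only substantive variation is your Schatten-norm bound for killing the cumulants of order $s\geq 3$ when $\alpha\leq 3/2$, which is a legitimate (and somewhat shorter) alternative to the paper's explicit Cauchy--Schwarz estimates in Lemma \ref{lem:0<H<3/4_neardiag_h}.
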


\begin{remark}\label{r:cs_is_finite}
It is worthwhile recalling the fact that the constant $c_s$ in \eqref{e:cs} is, indeed, finite. Indeed, by an application of the Cauchy-Schwarz inequality,
\begin{equation}\label{e:cs_finite}
c_s \leq \bigg( \int_0^1\int_0^1 \abs{x_1-x_2}^{2\alpha-4} dx_1 dx_2  \bigg)^{s/2}=\bigg(\frac{1}{(2\alpha-3)(\alpha-2)}\bigg)^{s/2}.
\end{equation}
\end{remark}

Theorem \ref{t:MSD_asymptotic_dist} allows us to develop the asymptotic distribution of the $\MSD$-based least square estimator of the diffusivity coefficient and diffusion exponent. Recast the (pathwise) system \eqref{e:regression} as the regression model
\begin{equation}\label{e:MSD_regression}
Q_n = M_n \boldsymbol\beta + {\boldsymbol \varepsilon}_n,
\end{equation}
where
\begin{equation}\label{e:lm_notation}
Q_n = \left(\begin{array}{c}
\log \overline{\mu}_2(h_1) \\
\vdots\\
\log \overline{\mu}_2(h_m)
\end{array}\right), \quad
{\boldsymbol \beta} =
\left(\begin{array}{c}
\log \theta \\
\alpha
\end{array}\right), \quad M_n = \left(\begin{array}{cc}
1 & \log(h_1)\\
\vdots & \vdots \\
1 & \log(h_m)
\end{array}\right),
\end{equation}
and ${\boldsymbol \varepsilon}$ has a distribution to be determined. We will denote by
\begin{equation}\label{e:lm_estimator}
\widehat{\boldsymbol \beta}_n:=(M_n^T M_n)^{-1}M_n^T Q_n = (\hspace{0.5mm}(\widehat{\log\theta})_n, \hspace{1mm}\widehat{\alpha}_n \hspace{0.5mm})^T
\end{equation}
the estimator generated by the ordinary least squares solution to the system \eqref{e:MSD_regression}. The next corollary describes the asymptotic distribution of the least squares estimator \eqref{e:lm_estimator}.

\begin{corollary}\label{t:MSD_asymptotic_coro}Suppose the assumptions (A1) and (A2) hold. Then, as $n\rightarrow\infty$,
    \begin{equation}\label{e:lm_estimator_asymptotic}
    \frac{n h^{\alpha}}{\eta(n)\zeta(h)}
    \left(
      \begin{array}{c}
        \frac{1}{\log h }((\widehat{\log\theta})_n-{\log \theta}) \\
        \widehat{\alpha}_n-{\alpha} \\
      \end{array}
    \right)
    \stackrel{d}{\rightarrow}
    \left(
      \begin{array}{c}
        U^T \\
        -U^T \\
      \end{array}
    \right)
    A{\mathbf{Z}}.
    \end{equation}
In \eqref{e:lm_estimator_asymptotic},
\begin{equation}\label{e:A(theta,alpha)}
    A =A(\theta,\alpha)=\diag( \zeta(w_1)/(\theta w_1^{\alpha}),...,{\zeta(w_m)}/(\theta w_m^{\alpha})),
    \end{equation}
$\eta(\cdot)$, $\zeta(\cdot)$ and ${\mathbf{Z}}$ are as in Theorem \ref{t:MSD_asymptotic_dist}, and
    \begin{equation}\label{e:u_and_v}
    U^T=\frac{1}{c_w}\Big(\sum_{k=1}^{m} \log(w_k/w_1) , ... , \sum_{k=1}^{m} \log(w_k/w_m)\Big)
    \end{equation}
    with constant
    \begin{equation}\label{e:c_det_of_MTM}
    c_w=m \sum_{k=1}^{m} \log^2(w_k) - \Big(\sum_{k=1}^{m} \log(w_k)\Big)^2.
    \end{equation}

\end{corollary}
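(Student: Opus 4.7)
The plan is to start from the OLS identity
$$
\widehat{\boldsymbol\beta}_n - \boldsymbol\beta = (M_n^T M_n)^{-1} M_n^T (Q_n - M_n \boldsymbol\beta),
$$
and to analyze the residual vector entrywise by inserting $\log EX^2(h_k)$ and splitting it into a stochastic fluctuation $\log \overline{\mu}_2(h_k) - \log EX^2(h_k)$ and a deterministic bias $\log EX^2(h_k) - (\log \theta + \alpha \log h_k)$. I would then handle these three pieces in turn: linearization of the stochastic part by the delta method, asymptotic negligibility of the bias after rescaling, and an explicit computation of the OLS matrix $(M_n^T M_n)^{-1} M_n^T$. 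Slutsky's theorem finally assembles them into \eqref{e:lm_estimator_asymptotic}.

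For the stochastic part, Proposition \ref{prop:assumption_regu_shortmemo}(i) gives $EX^2(h_k) \sim \theta w_k^\alpha h^\alpha$, while Theorem \ref{t:MSD_asymptotic_dist} together with $\eta(N_k)/N_k \sim \eta(n)/n$ (valid because (A2) implies $h_k/n \to 0$) and the elementary identity $\zeta(h_k) = \zeta(w_k) \zeta(h)$ forces $\overline{\mu}_2(h_k)/EX^2(h_k) = 1 + O_P(\eta(n) \zeta(h)/(n h^\alpha))$. A direct check in each of the three ranges of $\alpha$ shows, via (A2), that $\eta(n)\zeta(h)/(n h^\alpha) \to 0$; hence a first-order Taylor expansion of $\log(1+x)$ yields
$$
\frac{n h^\alpha}{\eta(n) \zeta(h)} \big(\log \overline{\mu}_2(h_k) - \log EX^2(h_k)\big) = \frac{\zeta(w_k)}{\theta w_k^\alpha} \cdot \frac{N_k}{\eta(N_k) \zeta(h_k)} \big(\overline{\mu}_2(h_k) - EX^2(h_k)\big) + o_P(1),
$$
which by Theorem \ref{t:MSD_asymptotic_dist} (applied jointly in $k$) converges in distribution to $A {\mathbf Z}$, recognizing $\zeta(w_k)/(\theta w_k^\alpha) = A_{kk}$. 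For the bias, Proposition \ref{prop:assumption_regu_shortmemo}(i) bounds it by $C h^{-\delta}$, and a case-by-case verification using (A2) confirms $h^{-\delta} \cdot n h^\alpha/(\eta(n)\zeta(h)) \to 0$ in all three ranges.

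The remaining step is algebraic. Writing $\log h_k = \log w_k + \log h$, the $\log h$-dependent terms cancel in $\det(M_n^T M_n)$, which reduces to the constant $c_w$ of \eqref{e:c_det_of_MTM}. The $k$-th entry of the second row of $(M_n^T M_n)^{-1} M_n^T$ equals
$$
\frac{m \log h_k - \sum_\ell \log h_\ell}{c_w} = \frac{1}{c_w}\sum_\ell \log(w_k/w_\ell) = -(U^T)_k,
$$
independently of $h$, while the $k$-th entry of the first row equals $\log h \cdot (U^T)_k + \frac{1}{c_w}\sum_\ell \log w_\ell \log(w_\ell/w_k)$ and therefore, after division by $\log h$, tends to $(U^T)_k$ (note that (A2) forces $h \to \infty$, hence $\log h \to \infty$). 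Slutsky's theorem then delivers \eqref{e:lm_estimator_asymptotic}. The main obstacle is book-keeping the various polynomial scales in $h$ and $n$ permitted by (A2) when checking that both the quadratic Taylor remainder and the bias are $o_P(1)$ after rescaling across all three sub-ranges of the diffusion exponent $\alpha$.
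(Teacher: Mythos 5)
Your proposal is correct and follows essentially the same route as the paper: Taylor-expand the log residuals, show the deterministic bias from Proposition \ref{prop:assumption_regu_shortmemo}(i) vanishes under the rescaling $n h^{\alpha}/(\eta(n)\zeta(h))$, and compute $(M_n^T M_n)^{-1}M_n^T$ explicitly via $\log h_k = \log w_k + \log h$ to extract $U^T$ and $c_w$. The only cosmetic difference is that you subtract the bias at the level of logarithms while the paper subtracts $EX^2(h_k)/(\theta h_k^{\alpha}) - 1$ before taking logs; both reduce to the same estimate \eqref{e:MSD_asymptotic_bias_go_0}.
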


Corollary \ref{t:MSD_asymptotic_coro} states that the limiting distributions for $\widehat{{\boldsymbol\beta}}_n$ are qualitatively distinct as a function of the underlying diffusion exponent $\alpha$. In particular, a non-Gaussian limit appears in the superdiffusive range $3/2<\alpha<2$. Though probably of little interest in the modeling of viscoelastic diffusion, superdiffusion appears in many other applications (e.g., Brokmann et al.\ \cite{brokmann:hermier:messin:desbiolles:bouchaud:dahan:2003}, Margolin and Barkai \cite{margolin:barkai:2005}; note that in these papers the processes are viewed as following L\'{e}vy walk-type dynamics).

\begin{remark}
Corollary \ref{t:MSD_asymptotic_coro} can be directly used in the construction of confidence intervals, at least starting from knowledge that $\alpha$ lies in one of the subregions $(0,3/2)$ or $(3/2,2)$ of the parameter space. To fix ideas, consider the parameter $\alpha$; the ensuing argument can be easily adapted for $\theta$. By Corollary \ref{t:MSD_asymptotic_coro}, $\widehat{\alpha}_n-{\alpha}$ is asymptotically equivalent to
$$
 -\frac{\eta(n)\zeta(h)}{n h^{\alpha}}U^T A{\mathbf{Z}}=: \frac{\eta(n)\zeta(h)}{n h^{\alpha}}\sigma(\theta,\alpha) Z(\alpha),
$$
where $\sigma(\theta,\alpha)$ is a smooth function of $(\theta,\alpha)$ defined as to make $Z(\alpha)$ a standardized random variable. When $0 < \alpha < 3/2$, this is clearly possible, since the limiting distribution is Gaussian and $-U^T A$ and $\Sigma$ are smooth functions of $\theta$ and $\alpha$ (see \eqref{e:Sigma_0<H<3/4}, \eqref{e:MSD_asymptotic_various_lags} and \eqref{e:u_and_v}). When $3/2<\alpha<2$, first note that ${\mathbf{Z}}$ in Theorem \ref{t:MSD_asymptotic_dist} is a rank 1 random vector. Indeed, recall that the characteristic function of a standardized (mean zero, variance one) Rosenblatt random variable is given by
\begin{equation}\label{e:standard_Rosenblatt}
\phi_{\alpha}(t) = \exp\Big\{ \frac{1}{2}\sum^{\infty}_{s=2}(2 i t \psi(\alpha))^s \frac{c_s}{s}\Big\}, \quad \psi(\alpha) := \Big(\frac{(2 \alpha - 3)(\alpha-1)}{2}\Big)^{1/2},
\end{equation}
(see Veillette and Taqqu \cite{veillette:taqqu:2013}, expression (4)). Let $Z_1$ be a Rosenblatt random variable with normalizing constant $\tau$ (i.e., with the latter in place of $\psi(\alpha)$ in \eqref{e:standard_Rosenblatt}), and let $Z_2 = \hdots = Z_m := Z_1$. Then, $Ee^{i \sum^{m}_{k=1}t_k Z_k} = Ee^{i Z_1 \sum^{m}_{k=1}t_k } = \exp\{\frac{1}{2}\sum_{s=2}^{\infty} [ 2i \tau \hspace{0.5mm}\sum^{m}_{k=1}t_k ]^s\frac{c_s}{s}\}$. So, denote by $\widetilde{Z}$ the limiting Rosenblatt random variable obtained in \eqref{e:tau}. Then, $Z_{\alpha}:= (\psi(\alpha)/\tau) \widetilde{Z}$ is standardized and, by \eqref{e:tau} and \eqref{e:standard_Rosenblatt}, the coefficient $\psi(\alpha)/\tau$ also depends smoothly on $\alpha$.

So, the consistency of $\widehat{\theta}_n$ and $\widehat{\alpha}_n$ for $\theta$ and $\alpha$, respectively, implies that of $\sigma(\widehat{\theta}_n,\widehat{\alpha}_n)$ for $\sigma(\theta,\alpha)$. When $0<\alpha < 3/2$, $Z(\alpha) \sim N(0,1)$, which is independent of $\alpha$. Then, an approximate $100(1-\xi)\%$ confidence interval for $\alpha$ is simply
 $$
 \Big(\widehat{\alpha}_n + \Big(\frac{h}{n}\Big)^{1/2}\sigma(\widehat{\theta}_n,\widehat{\alpha}_n) z_{\xi/2}, \widehat{\alpha}_n + \Big(\frac{h}{n}\Big)^{1/2}\sigma(\widehat{\theta}_n,\widehat{\alpha}_n) z_{1-\xi/2}\Big).
 $$
When $3/2 < \alpha < 2$, by \eqref{e:cs_finite} and the dominated convergence theorem, the characteristic function $\phi_{\alpha}(t) =: \phi(t,\alpha)$ of $Z_{\alpha}$ is continuous with respect to $\alpha$ for $t$ around the origin. Now consider the function $\phi(z,\alpha)$ with domain in $z$ extended to a vicinity of the origin of $\bbC$. By applying Theorem 7.1.1 in Lukacs \cite{lukacs1970} and the uniqueness of analytic continuation, we obtain that $\phi(t,\alpha)$ is continuous with respect to $\alpha$ for all $t\in \bbR$. Consequently, the cumulative distribution function and the quantile function $z_{\varsigma}(\alpha)$ are also continuous with respect to $\alpha$, whence $z_{\varsigma}(\widehat{\alpha}_n) \overset{P}{\rightarrow} z_{\varsigma}(\alpha)$ for $\varsigma \in (0,1)$. So, an approximate $100(1-\xi)\%$ confidence interval for $\alpha$ is
\begin{equation}\label{e:CI_Rosenblatt}
 \Big(\widehat{\alpha}_n + \Big(\frac{h}{n}\Big)^{2-\widehat{\alpha}_n}\sigma(\widehat{\theta}_n,\widehat{\alpha}_n) z_{\xi/2}(\widehat{\alpha}_n), \widehat{\alpha}_n + \Big(\frac{h}{n}\Big)^{2-\widehat{\alpha}_n}\sigma(\widehat{\theta}_n,\widehat{\alpha}_n) z_{1-\xi/2}(\widehat{\alpha}_n) \Big)
\end{equation}
(see Veillette and Taqqu \cite{veillette:taqqu:2013} for numerical results on the quantiles of the Rosenblatt distribution). In \eqref{e:CI_Rosenblatt}, we are using the fact $(\frac{h}{n})^{2-\alpha}/(\frac{h}{n})^{2-\widehat{\alpha}} \rightarrow 1$, which can be verified by taking logs.
\end{remark}

\begin{figure}[h]
	\begin{center}
	\includegraphics[height=2in,width=2.5in]{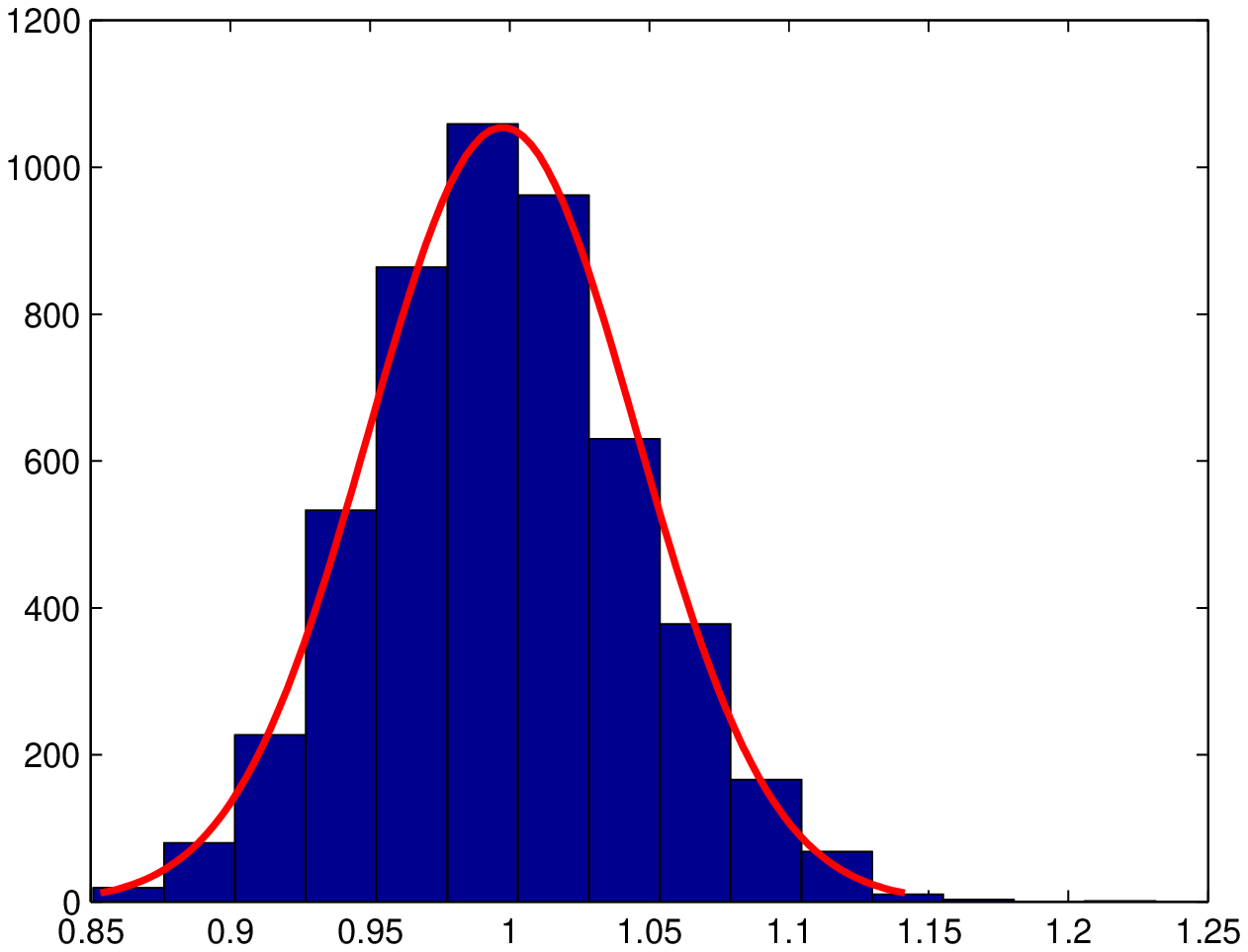} \ \includegraphics[height=2in,width=2.5in]{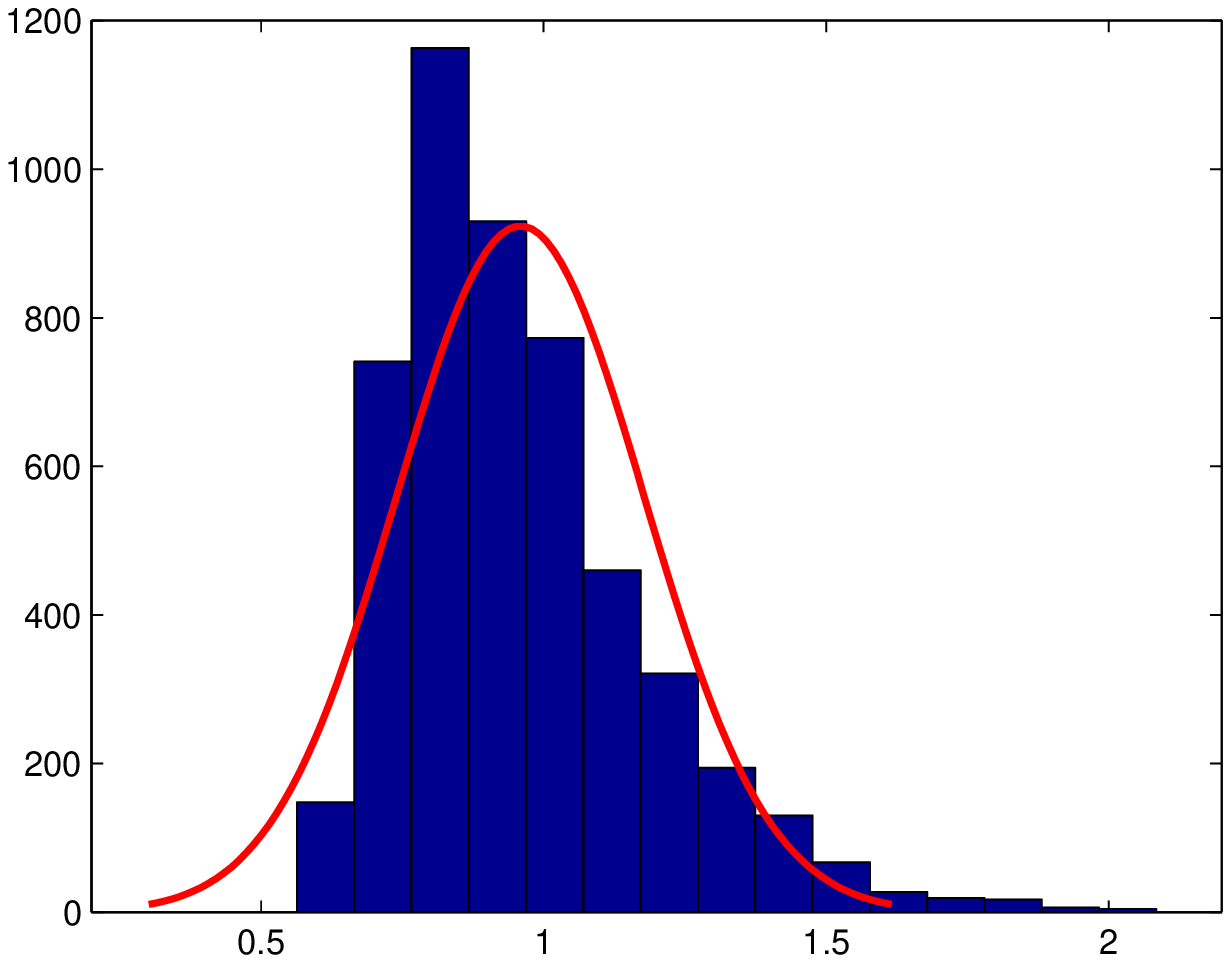}
	\includegraphics[height=2in,width=2.5in]{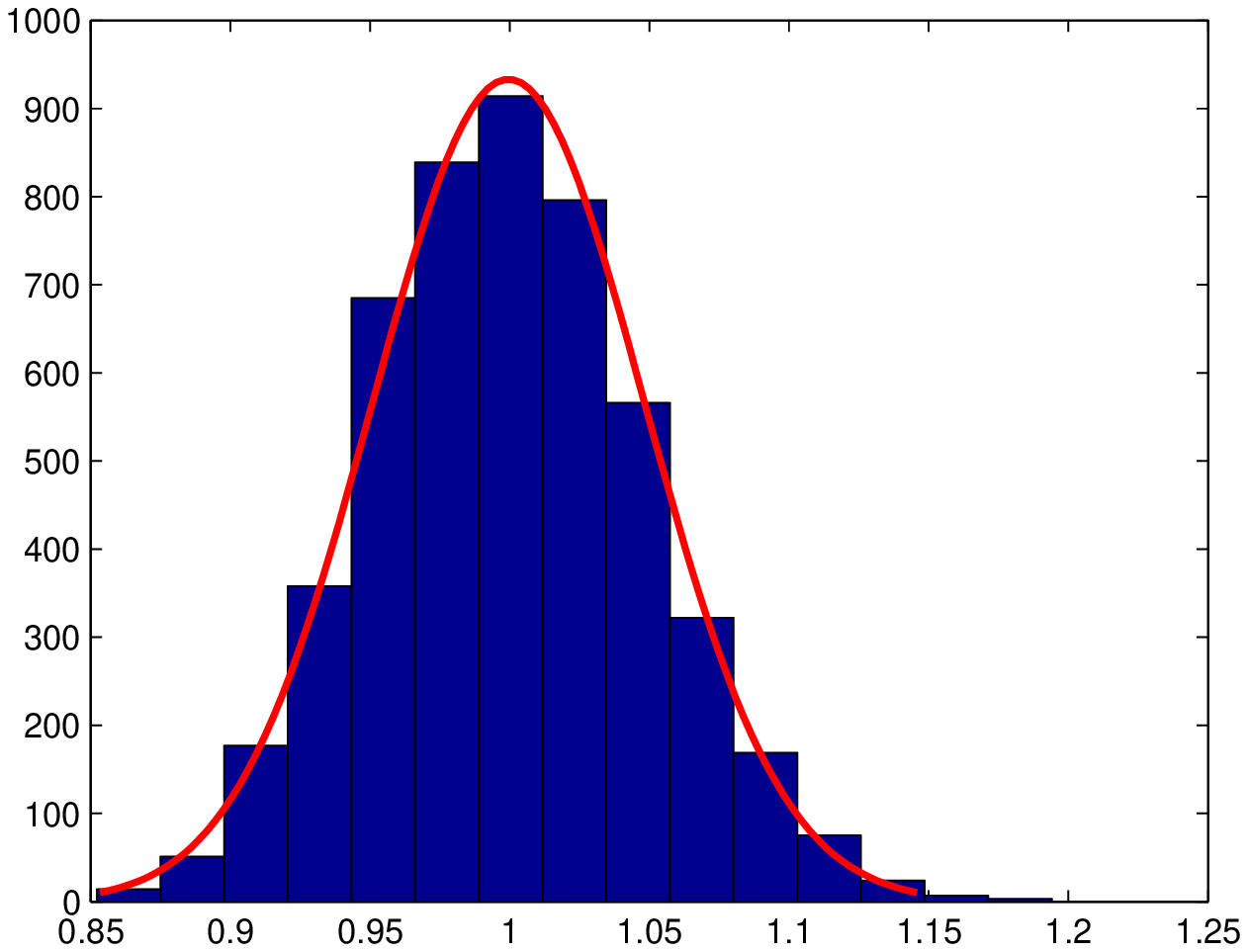} \ \includegraphics[height=2in,width=2.5in]{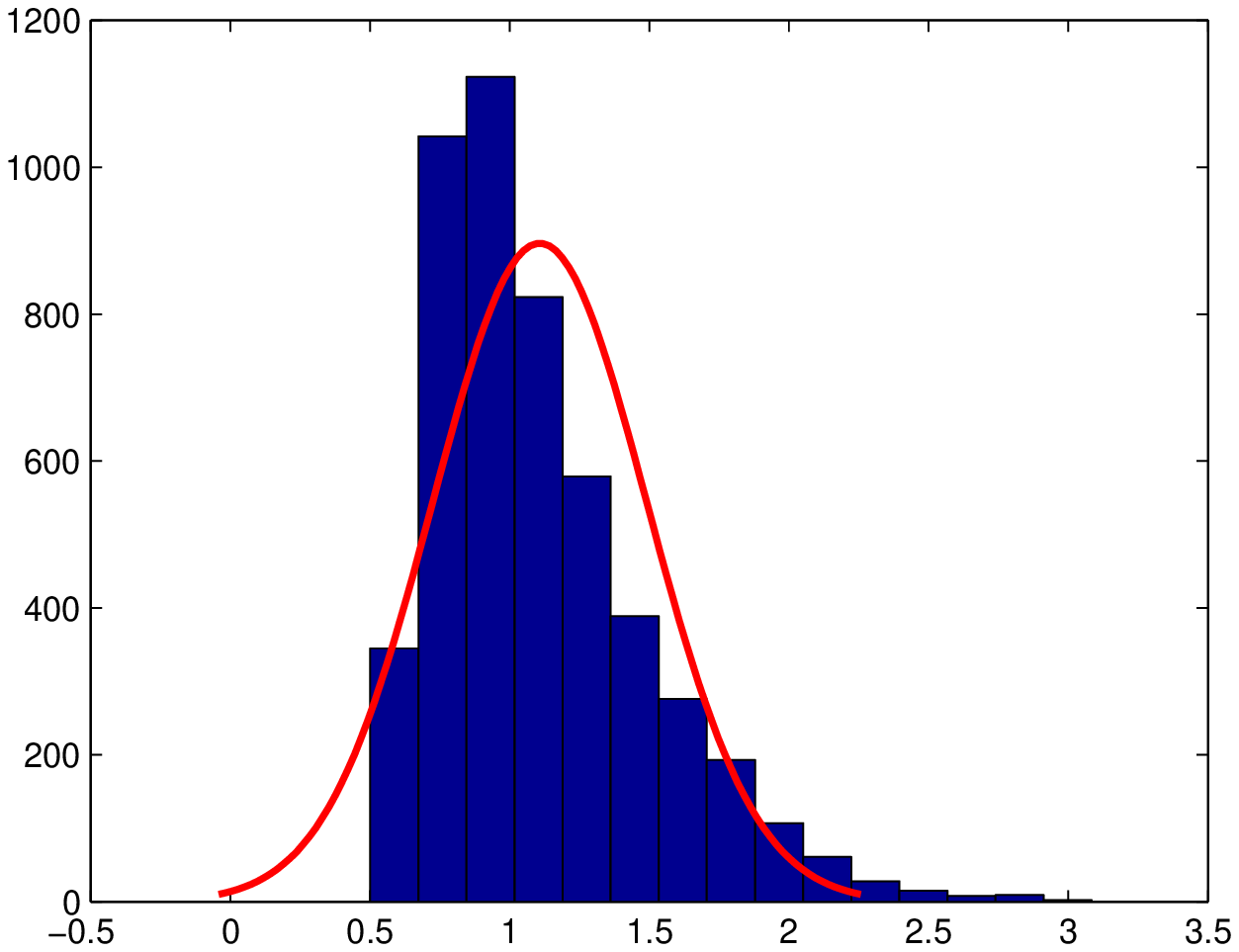}
	\caption{\label{f:mc_distribution_msd_fBm_ifOU} Monte Carlo distribution of $\overline{\mu}_{2}(1)$ over 5,000 paths of size $2^{10}$. Top plots, fBm (left: $\alpha = 0.5$; right: $\alpha = 1.8$. Bottom plots, ifOU (left: $\alpha = 0.5$; right: $\alpha = 1.8$. In both cases, $\lambda=1$).
	}
	\end{center}
\end{figure}

To study the finite-sample properties of $\MSD$--based estimation and the quality of the asymptotic approximations described in Theorem \ref{t:MSD_asymptotic_dist} and Corollary \ref{t:MSD_asymptotic_coro}, Monte Carlo experiments were conducted based on sub- and superdiffusive instances of fBm and ifOU processes. Figure \ref{f:mc_distribution_msd_fBm_ifOU} displays the Monte Carlo distributions of the $\MSD$, i.e., histograms and best Gaussian fit. The plots reflect the results described in Theorem \ref{t:MSD_asymptotic_dist}: for the subdiffusive Hurst parameter value of $H=0.25$ $(\alpha=0.5)$, the distribution is distinctively Gaussian; by contrast, for the strongly superdiffusive value $H=0.9$ $(\alpha=1.8)$, the Rosenblatt-like attractor skews the finite-sample distribution. Moreover, Table \ref{table:all_lags_vs_two_lags} displays results for $\widehat{H} = \widehat{\alpha}_{\MSD}/2$ under a fBm. The simulations encompass two distinct situations. In the first one, we follow the common practice in microrheology of taking a large number of consecutive lag values such as $h_{\min},h_{\min}+1,\hdots,h_{\max}-1,h_{\max}$, where $h_{\min}=2$ and $h_{\max}=128$. In the second one, we pick only two lag values, namely $h_1 = 2$ and $h_2 = 128$. The results show that dropping most of the $\MSD$s has little impact on the performance of $\widehat{\boldsymbol \beta}_n$. Moreover, simulation studies not shown provide evidence that in both subdiffusive and superdiffusive ranges ($H=0.25,0.5,0.75,0.9$ or $\alpha = 0.5,1,1.5,1.8$, respectively), a pairwise combination of two low lag values (such as $h_1 = 1$ and $h_2 = 2$) leads to the best statistical performance, as measured by the Monte Carlo mean squared error.

As expected, though, the results for the ifOU are quite distinct. Since the latter is not exactly self-similar, the MSD curves display the asymptotic flattening effect revealed in Figure \ref{f:msd_plots} for $H = 0.25$ ($\alpha = 0.50$). This is what drives biophysicists to use larger lags when modeling anomalous diffusion data in the first place. Table \ref{table:small_lags_vs_large_lags} illustrates this effect in the estimation of $\widehat{{\boldsymbol \beta}}_n$ based on triples of consecutive lag values: the estimation bias decreases as the chosen lags increase. However, since the variance also increases with the lag, due to the smaller number of terms in $\overline{\mu}_2(h)$, the choice of regression lags with the lowest mean squared error turns out to be $[2^5,2^6,2^7]$. A different phenomenon emerges in the superdiffusive range. Table \ref{table:small_lags_vs_large_lags} shows that for very high values of $H$ or $\alpha$, it may be optimal to use lower regression lag values. This is so because the bias for large values of $H$ or $\alpha$ is very large. Though not displayed, the same issue appears under different parameter values in the superdiffusive range, and its cause is a matter for future investigation.

\begin{table}[!hbp]
\caption{Mean and s.d. of $\widehat{H}$ for fBm: consecutive lags ($2,\hdots,2^{8}$) vs two lags ($h_1 = 2$, $h_2 = 2^{7}$)}
\begin{center}
\begin{tabular}{ccccccccc}
 & $H = 0.25$ & & & & $H = 0.90$ & & & \\
  & $(\alpha=0.5)$ & & & & $(\alpha=1.8)$ & & & \\
\hline
$n$ &  $\widehat{E}_{\textnormal{cons}}(H)$ & $\widehat{\textnormal{sd}}_{\textnormal{cons}}(H)$  &  $\widehat{E}_2(H)$  & $\widehat{\textnormal{sd}}_2(H)$ & $\widehat{E}_{\textnormal{cons}}(H)$ & $\widehat{\textnormal{sd}}_{\textnormal{cons}}(H)$  &  $\widehat{E}_2(H)$  & $\widehat{\textnormal{sd}}_2(H)$ \\
\hline
$2^9$   &0.2363   & 0.0723 &   0.2376  &  0.0527 & 0.8380  &  0.1067   & 0.8459 &   0.0890 \\
$2^{10}$ &0.2437  &  0.0508 &  0.2446  &  0.0359  & 0.8583  &  0.0772  &  0.8644   & 0.0648 \\
$2^{11}$  &0.2468   & 0.0361 &  0.2476 &   0.0253  & 0.8715   & 0.0574 &  0.8751  &  0.0507  \\
$2^{12}$  &0.2482  &  0.0252&   0.2486  &  0.0177   & 0.8792 &   0.0462 &  0.8814 &   0.0415 \\
\hline
\end{tabular}\label{table:all_lags_vs_two_lags}
\end{center}
\end{table}

\begin{table}[!hbp]
\caption{Mean and s.d. of $\widehat{H}$ for ifOU: small lags vs large lags, path length=$2^{12}$}
\begin{center}
\begin{tabular}{ccccc}
 & $H = 0.25$ ($\alpha=0.5$) &  & $H = 0.90$ ($\alpha=1.8$) & \\
\hline
lags &  $\widehat{E}(H)$ & $\widehat{\textnormal{sd}}(H)$  &  $\widehat{E}(H)$ & $\widehat{\textnormal{sd}}(H)$\\
\hline
$[2^3,2^4,2^5]$   & 0.3218 & 0.0272  & 0.8817 & 0.0266  \\
$[2^4,2^5,2^6]$  & 0.2964 & 0.0354  & 0.8441 & 0.0392 \\
$[2^5,2^6,2^7]$   & 0.2756 & 0.0488  & 0.8315 & 0.0506 \\
$[2^6,2^7,2^8]$   & 0.2591 & 0.0742   & 0.8239 & 0.0704\\
\hline
\end{tabular}\label{table:small_lags_vs_large_lags}
\end{center}
\end{table}

\section{Conclusion}

In this paper, we establish the asymptotic distribution of the $\MSD$--based estimator widely used in the biophysical literature on anomalous diffusion. We assume that the particle undergoes a Gaussian, stationary-increment process, and take a pathwise approach, i.e., only one particle path is available. Depending on the diffusion exponent of the underlying process, the $\MSD$--based estimator has Gaussian or non-Gaussian limiting distribution, as well as different convergence rates. The asymptotic distributions provided allow for a new statistical perspective on the many numerical-experimental results reported by the biophysical community. We illustrate our results analytically and computationally based on fractional Brownian motion and the integrated fractional Ornstein-Uhlenbeck process.

\appendix

\section{Proofs for Section \ref{s:assumptions}}
We first show a lemma that will be used in the proof of Proposition \ref{prop:assumption_regu_shortmemo}.

\begin{lemma}\label{lem:short_memo_s(x)}
Suppose assumptions (A1) and (A2) hold. Then, for $k_1,k_2=1,...,m$ and $h_k$ as in \eqref{e:hk},
\begin{equation}\label{e:gamma_kl/h^alpha}
    \abs{\frac{\gamma_{h}(z)_{k_1,k_2}}{h^{\alpha}} - \frac{C_{\alpha}^2}{h^{\alpha}} \int_{\bbR} e^{ix z}
    \frac{(e^{ih_{k_1} x}-1)(e^{-ih_{k_2} x}-1)}{\abs{x}^{\alpha+1}} dx  }\leq C h^{-\delta},
\end{equation}
where $h,z\in \bbZ$, $h\geq\varepsilon_0^{-2}$, and $\delta=\min(\alpha/2,\delta_0/2)>0$ (see \eqref{x_spec_rep} and \eqref{e:s(x)}).
\end{lemma}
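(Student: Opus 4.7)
The plan is to write $\gamma_h(z)_{k_1,k_2}$ via the spectral representation \eqref{x_spec_rep} and to identify the target integral in the lemma statement as the ``$s\equiv 1$ version'' of the same formula. Since $Y_j(h_k)=X(j+h_k)-X(j)$ has spectral integrand $C_\alpha e^{ijx}(e^{ih_kx}-1)/(ix)\cdot s(x)/|x|^{\alpha/2-1/2}$, the Brownian-measure isometry yields
\begin{equation*}
\gamma_h(z)_{k_1,k_2}=C_\alpha^2\int_\bbR e^{izx}\frac{(e^{ih_{k_1}x}-1)(e^{-ih_{k_2}x}-1)}{|x|^{\alpha+1}}|s(x)|^2\,dx.
\end{equation*}
Subtracting the $|s|^2\equiv 1$ term and applying the scaling change of variables $y=hx$ (together with $h_{k_i}=w_{k_i}h$), the $h^{-\alpha}$ prefactor cancels and the quantity to bound reduces to
\begin{equation*}
C_\alpha^2\left|\int_\bbR e^{izy/h}\frac{(e^{iw_{k_1}y}-1)(e^{-iw_{k_2}y}-1)}{|y|^{\alpha+1}}\bigl(|s(y/h)|^2-1\bigr)dy\right|.
\end{equation*}
The modulus-one factor $e^{izy/h}$ plays no role in the absolute-value estimate, so the final bound is automatically uniform in $z$.

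I then split the integral at $|y|=T:=h^{1/2}$. The hypothesis $h\geq\varepsilon_0^{-2}$ guarantees that for $|y|\leq T$ one has $|y/h|\leq h^{-1/2}\leq\varepsilon_0$, so \eqref{e:s(x)} applies pointwise and $\bigl||s(y/h)|^2-1\bigr|\leq C_0(|y|/h)^{\delta_0}\leq C_0 h^{-\delta_0/2}$. Combined with the fact that $|(e^{iw_{k_1}y}-1)(e^{-iw_{k_2}y}-1)|/|y|^{\alpha+1}$ is absolutely integrable over $\bbR$ (it behaves like $w_{k_1}w_{k_2}|y|^{1-\alpha}$ near zero, integrable because $\alpha<2$, and like $|y|^{-\alpha-1}$ at infinity, integrable because $\alpha>0$), this piece contributes at most $Ch^{-\delta_0/2}$. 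For $|y|>T$, the boundedness of $s$ gives $\bigl||s(y/h)|^2-1\bigr|\leq 1+\|s\|_\infty^2$ and $|e^{iw_{k_j}y}-1|\leq 2$, whence the tail is bounded by $C\int_{|y|>T}|y|^{-\alpha-1}\,dy=Ch^{-\alpha/2}$. Summing yields $C(h^{-\alpha/2}+h^{-\delta_0/2})\leq Ch^{-\delta}$ with $\delta=\min(\alpha/2,\delta_0/2)$, as claimed.

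The argument is essentially routine once the split is made correctly; the only real design decision is the threshold $T=h^{1/2}$, which is forced by the need to simultaneously stay inside the validity range of \eqref{e:s(x)} (guaranteed by $h\geq\varepsilon_0^{-2}$) and to balance the $h^{-\delta_0/2}$ bulk estimate against the $h^{-\alpha/2}$ tail estimate, producing the sharp rate $h^{-\min(\alpha/2,\delta_0/2)}$. No oscillation-cancellation arguments are needed, and the resulting constants depend only on $w_{k_1},w_{k_2},\alpha,C_0,\varepsilon_0,\|s\|_\infty$, hence can be chosen uniformly in $z$, $k_1$, $k_2$.
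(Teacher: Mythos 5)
Your argument is correct and follows essentially the same route as the paper's proof: the isometry plus the change of variables $y=hx$, the split of the error integral at $|y|=\sqrt{h}$, the bound \eqref{e:s(x)} on the bulk (valid since $h\geq\varepsilon_0^{-2}$ keeps $|y/h|\leq\varepsilon_0$) yielding $Ch^{-\delta_0/2}$, and the boundedness of $s$ on the tail yielding $Ch^{-\alpha/2}$. No discrepancies to report.
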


\begin{proof}
By \eqref{x_spec_rep}, we obtain a harmonizable representation for the size $h$ increment process $Y_z(h)$, namely,
\begin{equation}\label{e:increment_process_harmonizable}
Y_z(h) = X(z + h) - X(z) = C_\alpha \int_{\bbR} e^{izx} \frac{e^{ihx}-1}{ix} \frac{s(x)}{ |x|^{\alpha/2 - 1/2}}\widetilde{B}(dx), \quad z \in \bbZ.
\end{equation}
Fix $k_1$ and $k_2$. For notational simplicity, we will use the indices $k_1 = 1$, $k_2 = 2$. From \eqref{e:increment_process_harmonizable}, after the change of variables $xh=y$, we can write the covariance between the increments $Y_z(h_{k_1})$ and $Y_z(h_{k_2})$ as
$$
\gamma_{h}(z)_{1,2} = h^{\alpha} C_{\alpha}^2 \int_{\bbR} e^{i y z/h} \frac{(e^{iw_{1} y}-1)(e^{-iw_{2} y}-1)}{\abs{y}^{\alpha+1}} \abs{s(y/h)}^2 dy.
$$
Now break up the left-hand side of the expression \eqref{e:gamma_kl/h^alpha} into the sum
$$
\abs{ C_{\alpha}^2 \Big\{\int_{\abs{y}\leq\sqrt{h}} + \int_{\abs{y}>\sqrt{h}} \Big\} e^{iy z/h}
    \frac{(e^{iw_{1} y}-1)(e^{-iw_{2} y}-1)}{\abs{y}^{\alpha+1}} (\abs{s(y/h)}^2-1) dy  }=\abs{I_1+I_2},
$$
where $I_1$ and $I_2$ denote the integrals over the domains $(-\sqrt{h},\sqrt{h})$ and $\bbR \backslash (-\sqrt{h},\sqrt{h})$, respectively. Then, for $h\geq\varepsilon_0^{-2}$,
$$
\abs{I_1} \leq C \int_{\abs{y}\leq\sqrt{h}}  \frac{\abs{e^{iw_{1} y}-1}\abs{e^{-iw_{2} y}-1}}{\abs{y}^{\alpha+1}} \hspace{0.5mm}|\abs{s(y/h)}^2-1|\hspace{0.5mm} dy
$$
$$
\leq C \int_{\abs{y}\leq\sqrt{h}} \frac{\abs{e^{iw_{1} y}-1}\abs{e^{-iw_{2} y}-1}}{\abs{y}^{\alpha+1}} \abs{y/h}^{\delta_0} dy
$$
\begin{equation}\label{e:I1_bound}
\leq C h^{-\delta_0/2} \int_{\bbR}  \frac{\abs{e^{iw_{1} y}-1}\abs{e^{-iw_{2} y}-1}}{\abs{y}^{\alpha+1}} dy=C h^{-\delta_0/2}.
\end{equation}
Moreover, since $s(x)$ is bounded,
\begin{equation}\label{e:I2_bound}
\abs{I_2}
\leq C \int_{\abs{y}>\sqrt{h}}  \frac{4M}{\abs{y}^{\alpha+1}}dy
\leq C h^{-\alpha/2}.
\end{equation}
The expressions \eqref{e:I1_bound} and \eqref{e:I2_bound} yield \eqref{e:gamma_kl/h^alpha}. $\Box$\\
\end{proof}

\noindent {\sc Proof of Proposition \ref{prop:assumption_regu_shortmemo}}: Fix $k_1$ and $k_2$. For notational simplicity, we will use the indices $k_1 = 1$ and $k_2 = 2$. To show ($i$), set $z=0$ and $h_{k_1}=h_{k_2}=h$ in \eqref{e:gamma_kl/h^alpha}. Then,
$$
 \abs{\frac{E X^2(h) }{\theta h^{\alpha}}-1}= \abs{\frac{\gamma_{h}(0)_{1,1}}{\theta h^{\alpha}}-1} \leq C h^{-\delta},\quad h\rightarrow \infty,
$$
where $\theta=C_{\alpha}^2  \int_{\bbR} {|e^{iy}-1|^2}{\abs{y}^{-(\alpha+1)}}dy $.

We now show ($ii$). The proof draws upon conveniently rewriting the integral term in \eqref{e:gamma_kl/h^alpha} based on the closed form expression for the covariance of a (standard) fBm.
In fact, recall that $\tau = (\frac{C_{\alpha}}{C_{H}})^2 \frac{\alpha(\alpha-1)}{2}$, by \eqref{e:tau}. Then,
$$
\Big| \frac{\gamma_h(z)_{1,2}}{h^{\alpha}} - w_{1}w_{2} \tau \Big( \frac{z}{h}\Big)^{\alpha-2}\Big|
$$
$$
\leq \Big| \frac{\gamma_h(z)_{1,2}}{h^{\alpha}} - \frac{C_{\alpha}^2}{h^{\alpha}} \int_{\bbR} e^{ixz} \frac{(e^{ih_{1} x}-1)(e^{-ih_{2} x}-1)}{\abs{x}^{\alpha+1}}dx \Big|
$$
\begin{equation}\label{e:cov_decay_proof_main_term}
 + \Big|\Big(\frac{C_{\alpha}}{C_{H}}\Big)^2 \Big\{\frac{C_{H}^2}{h^{\alpha}} \int_{\bbR} e^{ixz} \frac{(e^{ih_{1} x}-1)(e^{-ih_{2} x}-1)}{\abs{x}^{\alpha+1}} dx \Big\} -
w_1w_2 \tau \Big( \frac{z}{h}\Big)^{\alpha-2}\Big|.
\end{equation}
Note that
\begin{equation}\label{e:covariance_size_h_increment_process_standard_fBm}
\frac{C_{H}^2}{h^{\alpha}}\int_{\bbR} e^{ixz} \frac{(e^{ih_{1} x}-1)(e^{-ih_{2} x}-1)}{\abs{x}^{\alpha+1}} dx
\end{equation}
is the expression for the covariance $\gamma_h(z)_{1,2}$ of a size $h$ increment process $Y_z(h)$ (see \eqref{e:increment_process_harmonizable}) formed from a standard fBm $B_H$. Pick $|z| \geq h_{m}+1$. If $\alpha = 1$, the integral \eqref{e:covariance_size_h_increment_process_standard_fBm} is identically zero, by the independence of non-overlapping increments. Alternatively, when $\alpha\neq 1$,
the closed form \eqref{e:fBm_cov} with $\sigma^2 = 1$ allows us to rewrite \eqref{e:covariance_size_h_increment_process_standard_fBm} as
$$
\frac{1}{h^{\alpha}}[EB_H(z + h_{1})B_H(h_{2}) - EB_H(z)B_H(h_{2})] = \frac{1}{2h^{\alpha}}\{|z + h_{1}|^{\alpha} - |z + h_{1} - h_{2}|^{\alpha}- |z|^{\alpha} + |z - h_{2}|^{\alpha}\}
$$
\begin{equation}\label{e:e:covariance_size_h_increment_process_standard_fBm_modified}
= \frac{|z|^{\alpha}}{2h^{\alpha}} \Big\{ \Big(\Big| 1 + w_{1}\frac{h}{z}\Big|^{\alpha} - 1 \Big) - \Big( \Big| 1 + (w_{1} - w_{2})\frac{h}{z}\Big|^{\alpha} - 1 \Big) + \Big( \Big| 1 - w_{2}\frac{h}{z}\Big|^{\alpha} - 1 \Big) \Big\}.
\end{equation}
Let $f(x) = x^{\alpha}$. Based on second order Taylor expansions of $f$ around 1, we can recast the expression \eqref{e:e:covariance_size_h_increment_process_standard_fBm_modified} as
$$
\frac{|z|^{\alpha}}{2h^{\alpha}} \Big\{\frac{\alpha(\alpha-1)}{2}\Big( \frac{h}{z}\Big)^2[w^2_{1} - (w_{1} - w_{2})^2 + w^2_{2}] + O\Big[ \Big( \frac{h}{z}\Big)^3\Big]\Big\}
$$
\begin{equation}\label{e:decay_cov_bound_2nd_term}
= \Big(\frac{|z|}{h}\Big)^{\alpha-2} \frac{\alpha(\alpha-1) w_{1} w_{2}}{2}\Big( 1 + O\Big( \frac{h}{z}\Big)\Big).
\end{equation}
Therefore, based on \eqref{e:decay_cov_bound_2nd_term} (which also encompasses the case $\alpha = 1$) and \eqref{e:gamma_kl/h^alpha}, the expression \eqref{e:cov_decay_proof_main_term} can be further bounded by $C [ (\frac{|z|}{h})^{\alpha-3}+ h^{-\delta} ]$. As a consequence, we arrive at
$$
\abs{ \frac{\gamma_{h}(z)_{1,2}}{|z|^{\alpha-2}h_1 h_2} - \tau} \leq C \bigg( \frac{h}{|z|}+ \bigg(\frac{h}{|z|}\bigg)^{\alpha} \frac{z^{2}}{h^{2+\delta}} \bigg)
\leq C \bigg(\frac{h}{|z|}\bigg)^{\min(1,\alpha)} \leq C \bigg(\frac{h}{|z|}\bigg)^{\delta},
$$
where the last two inequalities result from \eqref{e:h(n)} and \eqref{e:hm+1<=z<=n}. Setting $g(z,h)=\frac{\gamma_{h}(z)_{1,2}}{|z|^{\alpha-2}h_{1} h_{2}} - \tau$ yields \eqref{e:gamma=z2H-2_hk_hl_const+resid}. $\Box\\$

\section{Proofs for Section \ref{s:MSD_distribution}}

{\sc Proof of Theorem \ref{t:MSD_asymptotic_dist}}:
In view of \eqref{e:h(n)}, the claim is equivalent to
$$
    \bigg( \eta^{-1}(n)\zeta^{-1}(h_k)\sum_{i_k=1}^{n}
        \{Y_{i_k}^2(h_k)-EX^2(h_k)\}\bigg)_{k=1,\hdots,m}
        \stackrel{d}{\rightarrow} {\mathbf{Z}}.
$$
Consider the vector of increments
$$
{\mathbf Y}=(Y_1(h_1),\hdots,Y_{n}(h_1);Y_1(h_2),\hdots,Y_{n}(h_2);\hdots;Y_1(h_m),\hdots,Y_{n}(h_m))^T,
$$
The covariance matrix of $\mathbf{Y}$ can be written as $R_m(n)=(R_{k_1,k_2}(n))_{k_1,k_2=1,...,m}$, where
$$
  R_{k_1,k_2}(n) :=\left(
              \begin{array}{ccc}
                \gamma_{h}(0)_{k_1,k_2} & \cdots & \gamma_{h}(1-n)_{k_1,k_2} \\
                \vdots & \ddots & \vdots \\
                \gamma_{h}(n-1)_{k_1,k_2} & \cdots & \gamma_{h}(0)_{k_1,k_2}  \\
              \end{array}
            \right) \in \bbR^{n^2}.
$$
Let
\begin{equation}\label{e:z_n}
{\mathbf Z}_n=(Z_n(h_1),\hdots,Z_n(h_m))^T
\end{equation}
be the centered statistic defined by
\begin{equation*}
    Z_n(h_k)=\eta^{-1}(n)\zeta^{-1}(h_k)\sum_{i_k=1}^{n}\{Y_{i_k}^2(h_k)-EX^2(h_k)\}, \quad k = 1, \hdots, m.
\end{equation*}
Also, let
$$
    D_{m}(n):=\textnormal{diag}\Big(D_{1,1}(n),D_{2,2}(n),...,D_{m,m}(n)\Big), \quad
    D_{i,i}(n):=\frac{t_i}{\zeta(h_i)}I_n, \quad i=1,...,m,
$$
where $I_n$ denotes an $n\times n$ identity matrix. The weak limit \eqref{e:MSD_asymptotic_dist} can be established via characteristic functions. The initial manipulation of the characteristic function is very similar to that in Rosenblatt \cite{rosenblatt:1961}. First note that
$$
\int_{\bbR^n} \exp\Big\{ - \frac{1}{2}\textbf{y}^T(R^{-1}_{m}(n) - i c D_{m}(n))\textbf{y}\Big\}d\textbf{y} = (2 \pi)^{n/2} \det(R_m^{-1}(n)-icD_m(n))^{-1/2}, \quad c \in \bbR.
$$
By a similar computation to that in Taqqu \cite{taqqu:2011}, pp.42--43,
$$
\phi_{\textbf{Z}_n}(\textbf{t}) =  E( e^{i{\mathbf t}^T {\mathbf Z}_n} ) = E e^{i {\mathbf t}^T (Z_{n}(h_1),Z_{n}(h_2),\hdots, Z_{n}(h_m))^T}
$$
$$
= \int_{\bbR^{mn}}\exp\Big\{i\sum_{k=1}^{m}t_k z_n(h_k)\Big\}
        \frac{1}{\sqrt{\det (2\pi R_{m}(n))}} \exp\Big\{-\frac{1}{2}{\mathbf y}^T R_{m}^{-1}(n){\mathbf y}\Big\} d{\mathbf y}
$$
\begin{equation}\label{e:chf_in_terms_of_log}
= \exp\Big\{ \frac{1}{2} \Big[-2i \eta^{-1}(n)\sum_{k=1}^m nt_k \zeta^{-1}(h_k) \gamma_h(0)_{k,k} -\sum_{l=1}^{mn}\log(1-2i \eta^{-1}(n)\lambda_{l,mn})\Big] \Big\}.
\end{equation}
The scalars $\lambda_{l,mn}$, $l=1,\hdots,mn$, denote the eigenvalues (characteristic roots) of $R_{m}(n)D_{m}(n) = PJP^{-1}$, where $P\in GL(mn,\mathbb{C})$, and $J$ is in Jordan form. By the analytic expansion of $  \log(1-2i \eta^{-1}(n)\lambda_{l,mn})$,
\begin{equation}\label{e:log_expansion}
    -\sum_{l=1}^{mn}\log(1-2i \eta^{-1}(n)\lambda_{l,mn})=2i \eta^{-1}(n)\sum_{l=1}^{mn}\lambda_{l,mn}
        + \sum_{s=2}^{\infty} \frac{(2i)^s}{s} \eta^{-s}(n)\sum_{l=1}^{mn}\lambda_{l,mn}^s.
\end{equation}
However, $\sum_{l=1}^{mn}\lambda_{l,mn}=\textnormal{tr}(R_{m}(n)D_{m}(n))=\sum_{k=1}^m n t_k \zeta^{-1}(h_k) \gamma_h(0)_{k,k}$. Thus, by \eqref{e:log_expansion} we can rewrite \eqref{e:chf_in_terms_of_log} as
\begin{equation}\label{e:char_z_n}
    E e^{i{\mathbf t}^T{\mathbf Z}_n}
        = \exp\Big\{ \frac12 \sum_{s=2}^{\infty} \frac{(2i)^s}{s} \eta^{-s}(n)\sum_{l=1}^{mn}\lambda_{l,mn}^s \Big\}.
\end{equation}
Moreover,
$$
\eta^{-s}(n)\sum^{mn}_{l=1} \lambda^{s}_{l,mn} = \eta^{-s}(n)\textnormal{tr}[(R_{m}(n)D_{m}(n))^s]
$$
$$
=\eta^{-s}(n)\sum_{k_1,...,k_s=1}^{m}\Big\{
    \tr\Big[ R_{k_1,k_2}(n)D_{k_2,k_2}(n)R_{k_2,k_3}(n)D_{k_3,k_3}(n)\hdots R_{k_s,k_1}(n)D_{k_1,k_1}(n) \Big]\Big\}
$$
\begin{equation}\label{eqn:trace}
    \begin{split}
    = & \sum_{k_1,...,k_s=1}^{m}\Big\{ t_{k_1}t_{k_2} \cdots t_{k_s}
        \zeta^{-1}(h_{k_1})\zeta^{-1}(h_{k_2}) \cdots \zeta^{-1}(h_{k_s})\\
    & \times \eta^{-s}(n)\sum_{i_1,...,i_s=1}^{n} \gamma_h(i_1-i_2)_{k_1,k_2}
     \gamma_h(i_2-i_3)_{k_2,k_3}\cdots\gamma_h(i_s-i_1)_{k_s,k_1} \Big\}.
    \end{split}
\end{equation}
The weak limits \eqref{e:Sigma_0<H<3/4}, \eqref{e:Sigma_H=3/4} and \eqref{e:3/4<H<1_multivariate_Rosen_limit} are a consequence of Propositions \ref{prop:3/4<H<1} and \ref{prop:0<H<3/4}. $\Box$\\

\noindent {\sc Proof of Corollary \ref{t:MSD_asymptotic_coro}} We first show that
    \begin{equation}\label{e:MSD_asymptotic_various_lags}
    \frac{n h^{\alpha}}{\eta(n)\zeta(h)}
    \bigg( \frac{\overline{\mu}_2(h_k)}{\theta h_k^{\alpha}}-1 \bigg)_{k=1,\hdots,m}
    \stackrel{d}{\rightarrow} A{\mathbf{Z}},
    \end{equation}
    where $A=A(\theta,\alpha)$ and ${\mathbf{Z}}$ are as in \eqref{e:A(theta,alpha)} and Theorem \ref{t:MSD_asymptotic_dist}, respectively. Based on \eqref{e:mu2(hk)}, rewrite the left-hand side of the expression (\ref{e:MSD_asymptotic_dist}) as
$$
\bigg( \frac{N_k \theta h_k^{\alpha}}{\eta(N_k)\zeta(h_k)}\Big( \frac{\overline{\mu}_2(h_k)}{\theta h_k^{\alpha}} - \frac{EX^2(h_k)}{\theta h_k^{\alpha}}
 \Big)\bigg)_{k=1,\hdots,m}.
$$
This random vector has the same asymptotic distribution as
\begin{equation}\label{e:MSD_asymptotic_plus_bias}
\frac{ n h^{\alpha}}{\eta(n)\zeta(h)}
\bigg( \frac{ \theta w_k^{\alpha} }{ \zeta(w_k)}
\Big( \frac{\overline{\mu}_2(h_k)}{\theta h_k^{\alpha}} - 1\Big)\bigg)_{k=1,\hdots,m}
-\frac{ n h^{\alpha}}{\eta(n)\zeta(h)}
\bigg( \frac{ \theta w_k^{\alpha} }{ \zeta(w_k)}
\Big( \frac{EX^2(h_k)}{\theta h_k^{\alpha}} - 1\Big) \bigg)_{k=1,\hdots,m}.
\end{equation}
However, the bound \eqref{e:gamma=<h^2H_0} yields
\begin{equation}\label{e:MSD_asymptotic_bias_go_0}
\frac{ n h^{\alpha}}{\eta(n)\zeta(h)} \frac{ \theta w_k^{\alpha} }{ \zeta(w_k)}
\Big| \frac{EX^2(h_k)}{\theta h^{\alpha}_k} - 1\Big| \leq \frac{ n h^{\alpha}}{\eta(n)\zeta(h)} h^{-\delta} \rightarrow 0, \quad k=1,\hdots,m,
\end{equation}
where the zero limit is a consequence of \eqref{e:h(n)} and \eqref{e:eta_zeta}. The expression \eqref{e:MSD_asymptotic_various_lags} is now a consequence of \eqref{e:MSD_asymptotic_plus_bias}, \eqref{e:MSD_asymptotic_bias_go_0} and \eqref{e:MSD_asymptotic_dist}.\\

To show \eqref{e:lm_estimator_asymptotic}, rewrite
\begin{equation}\label{e:hatbeta-beta}
\widehat{\boldsymbol \beta}_n-{\boldsymbol \beta}=(M_n^T M_n)^{-1}M_n^T (Q_n-M_n{\boldsymbol \beta}).
\end{equation}
By entrywise first order Taylor expansions,
\begin{equation}\label{e:Qn-Mnbeta}
Q_n-M_n {\boldsymbol \beta}=\bigg( \log\Big(\frac{\overline{\mu}_2(h_k)}{\theta h_k^{\alpha}}\Big) \bigg)_{k=1,\hdots,m}
= \bigg( \frac{\overline{\mu}_2(h_k)}{\theta h_k^{\alpha}}-1 \bigg)_{k=1,\hdots,m}
+O\bigg( \frac{\overline{\mu}_2(h_k)}{\theta h_k^{\alpha}}-1 \bigg)^2_{k=1,\hdots,m}
\end{equation}
On the other hand, note that $\det(M_n^T M_n)= c_w$ (see \eqref{e:c_det_of_MTM}) is a constant with respect to $n$. Thus,
$$
(M_n^T M_n)^{-1}M_n^T= \frac{1}{c_w}
\left(
  \begin{array}{cc}
    \sum_{k=1}^{m} \log^2 h_k  & -\sum_{k=1}^{m} \log h_k \\
    -\sum_{k=1}^{m} \log h_k & m \\
  \end{array}
\right)
\left(
  \begin{array}{ccc}
    1 & \hdots & 1 \\
    \log h_1 & \hdots & \log h_m \\
  \end{array}
\right)
$$
$$
=\frac{1}{c_w}
\left(
  \begin{array}{ccc}
    \sum_{k=1}^{m} \log^2 h_k -\log h_1 \sum_{k=1}^{m} \log h_k  & \hdots & \sum_{k=1}^{m} \log^2 h_k -\log h_m \sum_{k=1}^{m} \log h_k  \\
    m\log h_1 -\sum_{k=1}^{m} \log h_k  &\hdots & m\log h_m -\sum_{k=1}^{m} \log h_k  \\
  \end{array}
\right).
$$
Moreover, for $j=1,...,m$,
$$
\sum_{k=1}^{m} \log^2 h_k -\log h_j \sum_{k=1}^{m} \log h_k =\log h \sum_{k=1}^{m} \log(w_k/w_j)+\sum_{k=1}^{m} \log w_k \log(w_k/w_j)
$$
and $m\log h_j -\sum_{k=1}^{m} \log h_k =\sum_{k=1}^{m} \log(w_j/w_k)$. Therefore, by \eqref{e:h(n)}, we obtain the entrywise asymptotic equivalence
\begin{equation}\label{e:MnTMn-1Mn}
(M_n^T M_n)^{-1}M_n^T \sim \frac{1}{c_w} \left(
                                \begin{array}{cc}
                                  \log h & 0 \\
                                  0 & 1 \\
                                \end{array}
                              \right)
\left(
  \begin{array}{ccc}
    \sum_{k=1}^{m} \log(w_k/w_1) & ... & \sum_{k=1}^{m} \log(w_k/w_m) \\
    \sum_{k=1}^{m} \log(w_1/w_k) & ... & \sum_{k=1}^{m} \log(w_m/w_k) \\
  \end{array}
\right).
\end{equation}
By \eqref{e:hatbeta-beta}, \eqref{e:Qn-Mnbeta}, \eqref{e:MnTMn-1Mn}, \eqref{e:MSD_asymptotic_various_lags}, and \eqref{e:u_and_v}, we arrive at \eqref{e:lm_estimator_asymptotic}.
$\Box$\\

\section{Auxiliary results}\label{s:auxiliary}

Lemmas \ref{lem:3/4<H<1_neardiag_h}-\ref{lem:0<H<3/4_s=2}, stated below, are used in the proofs in Propositions \ref{prop:3/4<H<1} and \ref{prop:0<H<3/4}. The proofs of the lemmas can be found in Section \ref{s:auxiliary_proofs}.

\begin{lemma}\label{lem:3/4<H<1_neardiag_h}
Consider $3/2<\alpha<2$ and $s \geq 2$, and suppose the assumptions (A1) and (A2) hold. Then, as $n\rightarrow \infty$,
\begin{equation}\label{e:0<H<3/4_sum_gamma}
    \zeta^{-1}(h_{k_1})\cdots \zeta^{-1}(h_{k_s}) \eta^{-s}(n)
    \sum_{\underset{\abs{i_1-i_2}\leq h\cup...\cup\abs{i_s-i_1}\leq h}{i_1,...,i_s=1}}^{n}
    \gamma_h(i_1-i_2)_{k_1,k_2}\cdots\gamma_h(i_s-i_1)_{k_s,k_1}
    \rightarrow 0.
\end{equation}
\end{lemma}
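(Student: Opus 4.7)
The plan is to exploit the small volume of the near-diagonal region together with a hybrid pointwise covariance bound derived from the two parts of Proposition \ref{prop:assumption_regu_shortmemo}. By subadditivity of the union of events $\{|i_\ell - i_{\ell+1}|\leq h\}$ (cyclic in $\ell$) and by the symmetry of the bound across $\ell$, it suffices---up to a factor of $s$---to control the sum restricted to $A := \{(i_1,\ldots,i_s) \in [1,n]^s :\ |i_1 - i_2|\leq h\}$.

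Two complementary pointwise estimates for $|\gamma_h(z)_{k_1,k_2}|$ are needed. Cauchy--Schwarz applied to \eqref{e:gamma_hk,hl}, combined with Proposition \ref{prop:assumption_regu_shortmemo}(i), yields the uniform bound $|\gamma_h(z)_{k_1,k_2}|\leq C h^\alpha$ for all $z \in \bbZ$ (tight near the diagonal), whereas Proposition \ref{prop:assumption_regu_shortmemo}(ii) gives the sharper decay $|\gamma_h(z)_{k_1,k_2}|\leq Ch^2|z|^{\alpha-2}$ for $|z|\geq h_m+1$. Defining the envelope
$$
f(z) := h^\alpha\, \mathbf{1}_{\{|z|\leq h_m\}} + h^2|z|^{\alpha-2}\,\mathbf{1}_{\{h_m<|z|\leq n\}},
$$
one has $|\gamma_h(z)_{k,l}|\leq Cf(z)$ whenever $|z|\leq n$. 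Since $\alpha > 3/2 > 1$, elementary estimates give $\|f\|_\infty = O(h^\alpha)$ and $\|f\|_1 = O(h^{\alpha+1}+h^2 n^{\alpha-1}) = O(h^2 n^{\alpha-1})$, the decay part dominating the total mass because $(h/n)^{\alpha-1}\to 0$.

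On $A$, bound the factor $\gamma_h(i_1-i_2)_{k_1,k_2}$ pointwise by $Ch^\alpha$; the pair $(i_1,i_2)$ contributes at most $3nh$ choices. The inner sum over $i_3,\ldots,i_s \in [1,n]$ of the remaining $s-1$ covariance factors is bounded, via iterated partial summation---equivalently, Young's inequality applied to the $(s-1)$-fold discrete convolution $f^{*(s-1)}$ evaluated at $i_2-i_1$---by $\|f\|_\infty \|f\|_1^{s-2} = O\big(h^\alpha (h^2 n^{\alpha-1})^{s-2}\big)$. Assembling the pieces,
$$
\sum_A \prod_{\ell=1}^{s}|\gamma_h(i_\ell - i_{\ell+1})_{k_\ell,k_{\ell+1}}| \leq C\cdot nh\cdot h^\alpha \cdot h^\alpha(h^2 n^{\alpha-1})^{s-2} = C\, n^{1+(\alpha-1)(s-2)}\, h^{2\alpha + 2s - 3}.
$$
Dividing by the normalization $\eta^s(n)\prod_k \zeta(h_k) \asymp n^{s(\alpha-1)} h^{2s}$ yields $O\big((h/n)^{2\alpha-3}\big)$, which vanishes as $n\to\infty$ by (A2), since $2\alpha-3 > 0$ in the regime $3/2 < \alpha < 2$. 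The delicate point---and main obstacle---is the design of the envelope $f$: neither of the two Proposition \ref{prop:assumption_regu_shortmemo} bounds alone suffices, as the uniform bound $Ch^\alpha$ is too coarse for $\ell^1$ summation off-diagonal while the decay bound is unavailable on the near-diagonal ``hole'' $|z|\leq h_m$; only the combined $\ell^\infty \cap \ell^1$ control delivers the correct power-counting through Young's inequality.
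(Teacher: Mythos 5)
Your proof is correct, and it takes a genuinely different route from the paper's. The paper splits the near-diagonal union into the $2^s-2$ possible intersection patterns of constraints $\{|i_\ell-i_{\ell+1}|\leq h\}$ versus $\{|i_\ell-i_{\ell+1}|\geq h+1\}$, treats one representative pattern explicitly (bounding the single near-diagonal factor by $Ch^{\alpha}$ via \eqref{e:hk_neq_hl_assumption_gamma_h_z<h} and converting the remaining off-diagonal factors into a Riemann sum that converges to a finite multiple integral of the form \eqref{e:cs}), and asserts the other patterns are similar; the resulting rate is $O((h/n)^{\alpha-1})$. You instead use a union bound over the $s$ cyclic constraints, then a single envelope $f = h^{\alpha}\mathbf{1}_{\{|z|\leq h_m\}} + h^2|z|^{\alpha-2}\mathbf{1}_{\{h_m<|z|\leq n\}}$ dominating every covariance factor regardless of whether its argument is near- or off-diagonal, and close with Young's inequality for the $(s-1)$-fold chain convolution, $\|f^{*(s-1)}\|_\infty\leq \|f\|_\infty\|f\|_1^{s-2}$. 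Your power counting checks out: $3nh\cdot h^{\alpha}\cdot h^{\alpha}(h^2n^{\alpha-1})^{s-2}$ divided by $n^{s(\alpha-1)}h^{2s}$ is exactly $(h/n)^{2\alpha-3}$, which vanishes under (A2) since $2\alpha-3>0$. What your approach buys is the elimination of both the case analysis (the envelope handles configurations with several near-diagonal gaps uniformly, which the paper leaves implicit) and the Riemann-sum convergence to the finite integral $c_s$; what it costs is a slightly weaker rate, $(h/n)^{2\alpha-3}$ versus the paper's $(h/n)^{\alpha-1}$, which is immaterial for the zero limit. You also correctly place the envelope's cutoff at $h_m$ rather than $h$, matching the validity range \eqref{e:hm+1<=z<=n} of the decay estimate \eqref{e:gamma=z2H-2_hk_hl_const+resid}.
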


\begin{lemma}\label{lem:3/4<H<1_offdiag_h}
Consider $3/2<\alpha <2$ and $s\geq 2$, and suppose the assumptions (A1) and (A2) hold. Then, as $n\rightarrow \infty$,
$$
    \zeta^{-1}(h_{k_1})\cdots \zeta^{-1}(h_{k_s}) \eta^{-s}(n)
    \sum_{\underset{\abs{i_1-i_2}\geq h+1,...,\abs{i_s-i_1}\geq h+1 }{i_1,...,i_s=1}}^{n}
    \gamma_h(i_1-i_2)_{k_1,k_2}\cdots\gamma_h(i_s-i_1)_{k_s,k_1}
$$
\begin{equation}\label{e:mult_sum_gamma_goes_to_mult_integ_3/4<H<1}
\rightarrow \tau^s \int^{1}_0 \hdots \int^{1}_0 |x_1 - x_2 |^{\alpha-2}\hdots|x_s - x_1 |^{\alpha-2}dx_1 \hdots dx_s.
\end{equation}
\end{lemma}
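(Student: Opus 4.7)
The plan is to substitute the sharp covariance expansion of Proposition \ref{prop:assumption_regu_shortmemo}(ii) into each factor of the cyclic product and recognize the leading term as a Riemann sum. Fix $k_1,\dots,k_s$. On the sub-region where $|i_j-i_{j+1}|\geq h_m+1$ for every $j$, Proposition \ref{prop:assumption_regu_shortmemo}(ii) gives
$$\gamma_h(i_j-i_{j+1})_{k_j,k_{j+1}}=|i_j-i_{j+1}|^{\alpha-2}h^2w_{k_j}w_{k_{j+1}}(\tau+g_{j,j+1}),\quad |g_{j,j+1}|\leq C(h/|i_j-i_{j+1}|)^\delta,$$
and for $|z|\geq h+1$ one has the uniform bound $|\gamma_h(z)_{k,l}|\leq Ch^2|z|^{\alpha-2}$ (the two bounds are of the same order at $|z|\sim h$). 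Because the cyclic product $\prod_{j=1}^s w_{k_j}w_{k_{j+1}}=\prod_j w_{k_j}^2$ cancels exactly the $\prod_j w_{k_j}^{-2}$ in the normalizer $\prod_j\zeta^{-1}(h_{k_j})=h^{-2s}\prod_j w_{k_j}^{-2}$, and $h^{2s}$ cancels $h^{-2s}$, the prefactor of the sum reduces to $\eta^{-s}(n)=n^{-s(\alpha-1)}$.

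Split the full sum into the bulk region, where every neighbor satisfies $|i_j-i_{j+1}|\geq h_m+1$, and the boundary region, where at least one neighbor lies in $[h+1,h_m]$. On the bulk, expand $\prod_j(\tau+g_{j,j+1})$ into $\tau^s$ plus $2^s-1$ cross-terms. The pure $\tau^s$ piece equals
$$\tau^s n^{-s(\alpha-1)}\sum_{\text{bulk}}\prod_{j=1}^s|i_j-i_{j+1}|^{\alpha-2}.$$
Rescaling $x_j:=i_j/n$ converts $|i_j-i_{j+1}|^{\alpha-2}$ into $n^{\alpha-2}|x_j-x_{j+1}|^{\alpha-2}$, so the prefactor collapses to $n^{-s}$ and this becomes a Riemann sum for $\tau^s\int_{[0,1]^s}\prod_j|x_j-x_{j+1}|^{\alpha-2}d\mathbf{x}=\tau^s c_s$. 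Convergence follows by a truncation argument: on $\{|x_j-x_{j+1}|\geq\epsilon\ \forall j\}$ the integrand is continuous, so the Riemann sum converges, while the near-diagonal contribution is bounded by $C\epsilon^{\alpha-3/2}$ uniformly in $n$ via the Cauchy--Schwarz estimate underlying Remark \ref{r:cs_is_finite}; sending $\epsilon\downarrow 0$ after $n\to\infty$ completes the step.

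The remaining contributions are errors. Each cross-term with $r\geq 1$ factors of $g$ produces, after the same rescaling, a term of order $O((h/n)^{r\delta})$ that vanishes by assumption (A2); here one shrinks $\delta$ if necessary so that $\delta<\alpha-1$, which preserves integrability of the perturbed integrand and is valid because $h/|i_j-i_{j+1}|<1$. The boundary region, once the uniform bound $|\gamma|\leq Ch^2|z|^{\alpha-2}$ is substituted, yields a Riemann-type sum restricted to $\{|x_j-x_{j+1}|\leq w_mh/n\}$ for at least one $j$, and the same diagonal-truncation estimate with $\epsilon=w_mh/n\to 0$ shows this vanishes. The principal obstacle is the Riemann-sum step for the pure-$\tau$ term: $\prod_j|x_j-x_{j+1}|^{\alpha-2}$ is unbounded near the cyclic diagonals and only conditionally integrable, so standard convergence of Riemann sums for continuous integrands does not apply, and one must rely on the diagonal truncation together with the strict bound $\alpha>3/2$, which simultaneously ensures $c_s<\infty$ and controls the discrete tail uniformly in $n$.
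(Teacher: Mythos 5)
Your overall strategy is the same as the paper's: substitute the expansion $\gamma_h(z)_{k_1,k_2}=|z|^{\alpha-2}h^2w_{k_1}w_{k_2}\{\tau+g\}$ from Proposition \ref{prop:assumption_regu_shortmemo}(ii), observe that the $w$'s and powers of $h$ cancel against the normalizer, identify the pure $\tau^s$ term as a Riemann sum converging to $\tau^s c_s$, and kill the cross-terms containing $g$ together with the near-diagonal/boundary contributions. Your treatment of the Riemann-sum step (diagonal truncation at level $\epsilon$, with the near-diagonal piece controlled uniformly in $n$ by the Cauchy--Schwarz device of Remark \ref{r:cs_is_finite}) is in fact more explicit than the paper, which simply asserts the convergence \eqref{e:sum_ik1_neq_ik2}; your handling of the region $h+1\leq|i_j-i_{j+1}|\leq h_m$ via the uniform bound $|\gamma_h(z)|\leq Ch^2|z|^{\alpha-2}$ is also sound.

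There is, however, one concrete error: in disposing of the cross-terms with $r\geq1$ factors of $g$, you shrink the exponent only to $\delta<\alpha-1$ and claim this ``preserves integrability of the perturbed integrand.'' That threshold is wrong. Lowering one cyclic exponent from $\alpha-2$ to $\alpha-2-\delta$ and running the Cauchy--Schwarz bound of Remark \ref{r:cs_is_finite} requires $2(\alpha-2-\delta)>-1$, i.e.\ $\delta<\alpha-3/2$, which is strictly stronger than $\delta<\alpha-1$. The condition $\delta<\alpha-1$ only makes the single perturbed factor locally integrable in one variable; it does not control the product. For instance, with $s=2$, $\alpha=1.6$ and $\delta=0.5<\alpha-1$, the perturbed integrand is $|x_1-x_2|^{2\alpha-4-\delta}=|x_1-x_2|^{-1.3}$, which is not integrable on $[0,1]^2$, so the claimed $O((h/n)^{r\delta})$ bound does not follow as stated (the divergent Riemann sum can eat into the prefactor). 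The paper avoids this by choosing an auxiliary exponent $\rho\in(0,\min(\delta,\alpha-3/2))$ and using $|g|\leq C(h/|z|)^{\delta}\leq C(h/|z|)^{\rho}$ on the range $|z|\geq h+1$ --- exactly the monotonicity you already invoke --- so the fix is merely to replace your threshold $\alpha-1$ by $\alpha-3/2$; note that this is also precisely where the hypothesis $\alpha>3/2$ enters the error estimate, not only the finiteness of $c_s$.
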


\begin{lemma}\label{lem:0<H<3/4_neardiag_h}
Consider $0 < \alpha \leq 3/2$ and $s \geq 3$, and suppose the assumptions (A1) and (A2) hold. Then, as $n\rightarrow \infty$,
\begin{equation}\label{e:0<H<3/4_neardiag_h_original_sum}
    \zeta^{-1}(h_{k_1})\cdots \zeta^{-1}(h_{k_s}) \eta^{-s}(n)
    \sum_{i_1,...,i_s=1}^{n}
    \gamma_h(i_1-i_2)_{k_1,k_2}\cdots\gamma_h(i_s-i_1)_{k_s,k_1}
    \rightarrow 0.
\end{equation}
\end{lemma}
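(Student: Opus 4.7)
My plan is to split the cyclic sum in \eqref{e:0<H<3/4_neardiag_h_original_sum} into a \emph{near-diagonal} piece, over indices with at least one cyclic difference $|i_j - i_{j+1}| \leq h$, and an \emph{off-diagonal} piece, over indices with $|i_j - i_{j+1}| > h$ for all $j$. I will then bound each piece separately and show both vanish after dividing by $\zeta(h_{k_1}) \cdots \zeta(h_{k_s}) \eta^s(n)$.

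For the off-diagonal piece, I would invoke Proposition \ref{prop:assumption_regu_shortmemo}(ii) to bound $|\gamma_h(z)_{k, l}| \leq C h^2 |z|^{\alpha - 2}$ uniformly. Passing to the cyclic differences $z_j = i_j - i_{j+1}$ (with the free index $i_1$ contributing a factor $n$), the sum is bounded by a constant times
$$
n h^{2s} \sum_{\substack{z_1, \ldots, z_{s-1} \\ |z_j| > h,\; |z_1 + \cdots + z_{s-1}| > h}} \prod_{j=1}^{s-1} |z_j|^{\alpha - 2} \cdot |z_1 + \cdots + z_{s-1}|^{\alpha - 2}.
$$
A rescaling analysis ($z_j = n u_j$) shows this cyclic sum is $O(n^{s(\alpha - 1) - 1})$ for $\alpha > 1$ (the integrand is integrable on $[-1, 1]^{s-1}$ by the same Cauchy--Schwarz bound as in Remark \ref{r:cs_is_finite}) and $O(h^{s(\alpha - 1) - 1})$ for $\alpha < 1$ (where the lower cutoff $|z_j| \geq h$ dominates). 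Dividing by $\eta^s(n) \zeta^s(h) = n^{s/2} h^{s(\alpha + 1/2)}$ then reduces the ratio to $(h/n)^{s(3/2 - \alpha)}$ for $\alpha > 1$, or $(h/n)^{s/2 - 1}$ for $\alpha < 1$, each vanishing under (A2). At $\alpha = 3/2$ the exponent $s(3/2-\alpha)$ vanishes, but the modified normalization $\eta(n) = \sqrt{n \log(n)}$ supplies the needed extra factor $(\log n)^{-s/2}$. At $\alpha = 1$, the leading coefficient $\tau$ in \eqref{e:tau} vanishes, so the effective decay sharpens to $|\gamma_h(z)| \leq C h^{2+\delta} |z|^{-1-\delta}$, which is summable and recovers the $(h/n)^{s/2-1}$ rate.

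For the near-diagonal piece, cyclic symmetry reduces the problem to the case $|i_1 - i_2| \leq h$ (up to a combinatorial factor $s$). Setting $z_1 = i_1 - i_2$ and summing out $i_3, \ldots, i_s$ in succession collapses the remaining chain of $s - 1$ covariances into the $(s-1)$-fold discrete convolution $(|\gamma_h|^{*(s-1)})(-z_1)$---this is the key step that avoids the naive, too-loose $L^{s-1}$ bound. The restricted sum becomes
$$
s \cdot n \sum_{|z_1| \leq h} |\gamma_h(z_1)| \cdot (|\gamma_h|^{*(s-1)})(-z_1).
$$
Applying $|\gamma_h(z_1)| \leq C h^\alpha$ (Proposition \ref{prop:assumption_regu_shortmemo}(i)) together with Young's bound $\|(|\gamma_h|^{*(s-1)})\|_\infty \leq \|\gamma_h\|_\infty \cdot L^{s-2}$, where $L := \sum_z |\gamma_h(z)|$, yields the estimate $C n h^{2\alpha + 1} L^{s-2}$. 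Sharp bounds on $L$, also derived from Proposition \ref{prop:assumption_regu_shortmemo}, are $L \leq C h^{\alpha + 1}$ for $\alpha < 1$, $L \leq C h^2$ for $\alpha = 1$ (using again the vanishing of $\tau$), and $L \leq C h^2 n^{\alpha - 1}$ for $1 < \alpha \leq 3/2$. Substituting and normalizing produces a ratio of order $(h/n)^{s/2 - 1}$ for $\alpha \leq 1$, and $(h/n)^{(s-2)(3/2 - \alpha)}$ for $1 < \alpha \leq 3/2$ (again with the logarithmic correction at $\alpha = 3/2$), both of which vanish for $s \geq 3$ under (A2).

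The main obstacle is the regime $1 < \alpha \leq 3/2$, where $L$ itself grows in $n$: the naive chain bound $C n h^\alpha L^{s-1}$ is too loose and would produce a divergent ratio at $s = 3, \alpha = 3/2$, so recognizing and exploiting the convolution structure to obtain $L^{s-2}$ in place of $L^{s-1}$ is essential. The hypothesis $s \geq 3$ is also essential throughout: the net exponents $s/2 - 1$ and $(s - 2)(3/2 - \alpha)$ arising above are strictly positive (in the non-boundary cases) precisely when $s \geq 3$, whereas for $s = 2$ the same analysis would no longer force the ratio to zero---consistent with the fact that the $s = 2$ term in the cumulant expansion \eqref{e:char_z_n} produces the non-trivial Gaussian variance in Theorem \ref{t:MSD_asymptotic_dist}(i)--(ii).
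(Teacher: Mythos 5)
Your decomposition into a near-diagonal piece ($|i_j-i_{j+1}|\leq h$ for some $j$) and an off-diagonal piece, and your treatment of the near-diagonal piece, match the paper's proof in substance: your Young-inequality bound $\|(|\gamma_h|)^{*(s-1)}\|_\infty\leq\|\gamma_h\|_\infty L^{s-2}$ is a clean repackaging of the paper's Cauchy--Schwarz step, which likewise peels off exactly two factors at sup/$L^2$ scale and bounds the remaining $s-3$ links in $\ell^1$; your resulting rates $(h/n)^{s/2-1}$ and $(h/n)^{(s-2)(3/2-\alpha)}$, with the $\log^{-(s/2-1)}(n)$ correction at $\alpha=3/2$, are exactly the paper's. (The appeal to $\tau=0$ at $\alpha=1$ is valid but unnecessary; $L\leq Ch^2\log n$ together with the $h\log^2(n)/n\to 0$ part of (A2) already suffices, which is what the paper uses.)

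The genuine gap is in your off-diagonal piece for $1<\alpha\leq 3/2$. You justify the rescaling bound $O(n^{s(\alpha-1)-1})$ by asserting that $\int_{[-1,1]^{s-1}}\prod_{j=1}^{s-1}|u_j|^{\alpha-2}\,|u_1+\cdots+u_{s-1}|^{\alpha-2}\,du$ is finite ``by the same Cauchy--Schwarz bound as in Remark \ref{r:cs_is_finite}.'' That bound requires $2\alpha-4>-1$, i.e.\ $\alpha>3/2$, which is precisely excluded here. Worse, the integral genuinely diverges when $1<\alpha\leq 1+1/s$ (a nonempty subrange for every $s\geq 3$, e.g.\ $(1,4/3]$ at $s=3$): the full-diagonal scaling exponent is $s(\alpha-2)+(s-2)=s(\alpha-1)-2$, which is $\leq -1$ there. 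In that regime the dominant contribution to your cyclic sum comes from the near-cutoff region $|z_j|\asymp h$ and is of order $h^{s(\alpha-1)-1}$, not $n^{s(\alpha-1)-1}$, so the stated intermediate bound is false, not merely unproved. The conclusion survives --- the correct rate is $(h/n)^{(s-2)(3/2-\alpha)}$ rather than your $(h/n)^{s(3/2-\alpha)}$ --- but you need a different argument to get it. Two repairs are available: (a) the paper's route, applying Cauchy--Schwarz to the two factors adjacent to one summed index so that one pair contributes $\sum_{|z|>h}|z|^{2\alpha-4}\asymp h^{2\alpha-3}$ while the remaining $s-2$ factors each contribute $\sum_{|z|>h}|z|^{\alpha-2}\asymp n^{\alpha-1}$; or (b) simply apply your own convolution device globally, $\sum_{i_1,\hdots,i_s}\prod|\gamma_h|\leq n\,\|\gamma_h\|_2^2\,L^{s-2}$, which combined with Lemma \ref{lem:0<H<3/4_s=2} and your bounds on $L$ gives the result without any separate off-diagonal analysis.
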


\begin{lemma}\label{lem:0<H<3/4_s=2}
Suppose the assumptions (A1) and (A2) hold. Then, as $n\rightarrow \infty$,
\begin{itemize}
  \item [(i)] in the parameter range $0<\alpha<3/2$,
$$
    \eta^{-2}(n)\zeta^{-1}(h_{k_1})\zeta^{-1}(h_{k_2}) \sum_{i_1,i_2=1}^{n} \gamma^2_h(i_1-i_2)_{k_1,k_2}
$$
\begin{equation}\label{e:0<H<3/4_s=2}
    \rightarrow w_{k_1}^{-(\alpha+1/2)}w_{k_2}^{-(\alpha+1/2)}\Big(\frac{C_\alpha}{C_H}\Big)^4 \norm{\widehat{G}(y;w_{k_1},w_{k_2})}^2_{L^2(\bbR)},
\end{equation}
where $\widehat{G}(y;w_{k_1},w_{k_2})$, $C_\alpha$ and $C_H$ are defined by \eqref{e:G_hat}, \eqref{x_spec_rep} and \eqref{e:CH_standard_fBm}, respectively;
  \item [(ii)] when $\alpha=3/2$,
\begin{equation}\label{e:H=3/4_s=2}
    \eta^{-2}(n)\zeta^{-1}(h_{k_1})\zeta^{-1}(h_{k_2}) \sum_{i_1,i_2=1}^{n} \gamma^2_h(i_1-i_2)_{k_1,k_2} \rightarrow 2\tau^2,
\end{equation}
where $\tau$ is given by \eqref{e:tau}.
\end{itemize}
\end{lemma}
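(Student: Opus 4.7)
The plan is to convert the double sum into a one-dimensional sum by shifting the index and then treat the two regimes separately: case (i) via the spectral representation of $\gamma_h(z)_{k_1,k_2}$ and a Riemann-sum argument (the off-diagonal covariances being square summable for $\alpha<3/2$), and case (ii) via the explicit polynomial asymptotic from Proposition \ref{prop:assumption_regu_shortmemo}(ii) (the sum just barely diverges, producing the $\log n$). Specifically, I first write
\begin{equation*}
\sum_{i_1,i_2=1}^{n}\gamma_h^2(i_1-i_2)_{k_1,k_2} = n\sum_{|z|<n}\gamma_h^2(z)_{k_1,k_2} - \sum_{|z|<n}|z|\,\gamma_h^2(z)_{k_1,k_2},
\end{equation*}
and in both regimes show that the $|z|$-weighted remainder is of strictly smaller order than $n$ times the first sum (immediate in case (ii) from $\gamma_h^2(z) \lesssim h^4/|z|$; in case (i) from the spectral tail bound described below). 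It then suffices to analyse $(n/[\eta^2(n)\zeta(h_{k_1})\zeta(h_{k_2})])\sum_{|z|<n}\gamma_h^2(z)_{k_1,k_2}$.

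For part (i), I would use the spectral form obtained in the proof of Lemma \ref{lem:short_memo_s(x)}: after the change of variables $y = xh$,
\begin{equation*}
\gamma_h(z)_{k_1,k_2}/h^\alpha = C_\alpha^2 \int_{\bbR} e^{iyz/h}\,\frac{(e^{iw_{k_1}y}-1)(e^{-iw_{k_2}y}-1)}{|y|^{\alpha+1}}\,|s(y/h)|^2\,dy =: \phi_h(z/h).
\end{equation*}
Since $\eta^2(n)=n$ and $\zeta(h_{k_1})\zeta(h_{k_2}) = (w_{k_1}w_{k_2})^{\alpha+1/2}h^{2\alpha+1}$, the object to study becomes $(w_{k_1}w_{k_2})^{-(\alpha+1/2)}h^{-1}\sum_{|z|<n}\phi_h(z/h)^2$, which is a Riemann sum of mesh $1/h$ on $(-n/h, n/h)$. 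Assumption \eqref{e:s(x)} together with dominated convergence gives $\phi_h \to \phi_\infty$ uniformly on compact $x$-sets, where $\phi_\infty$ is defined by replacing $|s(y/h)|^2$ with $1$; since $F(y) := \widehat{G}(y;w_{k_1},w_{k_2})/C_H^2$ lies in $L^1\cap L^2$ for $\alpha<3/2$, a uniform tail bound $|\phi_h(x)| \leq C(1+|x|)^{\alpha-2}$ makes $\phi_h^2$ integrable with an $h$-independent majorant, and the Riemann sum converges to $\int_{\bbR}\phi_\infty(x)^2\,dx$. Plancherel's identity then identifies this integral with $(C_\alpha/C_H)^4\,\| \widehat{G}(\cdot;w_{k_1},w_{k_2})\|_{L^2(\bbR)}^2$ up to the Fourier-transform normalisation implicit in the statement.

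For part (ii), $\int\phi_\infty^2$ diverges logarithmically, so instead I apply Proposition \ref{prop:assumption_regu_shortmemo}(ii): for $|z| \geq h_m + 1$,
\begin{equation*}
\gamma_h^2(z)_{k_1,k_2} = \frac{h^4\, w_{k_1}^2 w_{k_2}^2}{|z|}\,\bigl(\tau + g(z,h)\bigr)^2, \qquad |g(z,h)| \leq C(h/|z|)^\delta.
\end{equation*}
The leading $\tau^2$ piece contributes $h^4(w_{k_1}w_{k_2})^2\tau^2\sum_{h_m+1 \leq |z| < n}|z|^{-1} = 2h^4(w_{k_1}w_{k_2})^2\tau^2(\log n + O(1))$; the cross term $2\tau g$ and the $g^2$ term are controlled using $\sum_{|z| \geq h}|z|^{-1}(h/|z|)^\delta \leq C$, contributing $O(h^4)$; the short range $|z| \leq h_m$ is handled by $\gamma_h^2(z) \leq \gamma_h(0)^2 \leq C h^{2\alpha} = C h^3$ over $O(h)$ terms, again $O(h^4)$. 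Multiplying by the outer $n$ and dividing by $\eta^2(n)\zeta(h_{k_1})\zeta(h_{k_2}) = n\log n \cdot (w_{k_1}w_{k_2})^2 h^4$ leaves the claimed limit $2\tau^2$.

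The main obstacle is the uniform-in-$h$ tail estimate $|\phi_h(x)| \lesssim (1+|x|)^{\alpha-2}$ required in part (i). Because $s$ is only assumed bounded with a H\"older-type vanishing in \eqref{e:s(x)} near the origin, global integration by parts in $y$ is not justified, and the cleanest route is likely a hybrid split: use the spectral form to control $|z| \leq Mh$ (where the Riemann-sum limit is uniform in $M$) and the polynomial asymptotic of Proposition \ref{prop:assumption_regu_shortmemo}(ii) to control $|z| > Mh$ (where $\gamma_h^2$ decays like $|z|^{2\alpha-4}h^4$, producing a tail of order $(n/h)^{2\alpha-3}\to 0$), then send $M\to\infty$ after $n\to\infty$. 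The same split also establishes the negligibility of the $|z|$-weighted remainder $\sum_{|z|<n}|z|\gamma_h^2(z)_{k_1,k_2}$ uniformly across the range $1 \leq \alpha < 3/2$, where the naive bounds are borderline.
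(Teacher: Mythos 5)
Your proposal is correct and follows essentially the same route as the paper: reduce the double sum to a single sum over the lag $z$ (with the triangular weight), treat part (i) as a Riemann sum of mesh $1/h$ of the squared fGn-type covariance combined with Parseval's identity, and extract the harmonic-sum logarithm from $\gamma_h^2(z)\asymp h^4|z|^{-1}$ in part (ii), with the near-diagonal and residual-$g$ terms killed by $1/\log n$. The one obstacle you flag in part (i) --- a uniform-in-$h$ dominating function for $\phi_h^2$ --- is sidestepped in the paper by invoking Lemma \ref{lem:short_memo_s(x)} to write $h^{-\alpha}\gamma_h(z)=(C_\alpha/C_H)^2G_H(z/h)+O(h^{-\delta})$ with the error uniform in $z$, so the Riemann sum is taken over the \emph{fixed} integrable function $G_H^2$ and the cross/error terms vanish under (A2); your compact/tail split with $M\to\infty$ also works, though the tail contribution there is of order $M^{2\alpha-3}$ rather than $(n/h)^{2\alpha-3}$.
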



\begin{proposition}\label{prop:3/4<H<1}
Consider the parameter range $3/2 < \alpha < 2$ and suppose the assumptions (A1)--(A2) hold. Then, as $n\rightarrow \infty$, the vector ${\mathbf{Z}_n}=(Z_n(h_1),Z_n(h_2),...Z_n(h_m))^T$ in \eqref{e:z_n} converges in law to a Rosenblatt-like distribution whose characteristic function is given by
\begin{equation}\label{e:3/4<H<1}
\phi_{{\mathbf{Z}}}(\mathbf{t}) = \exp\Big\{\frac{1}{2} \sum^{\infty}_{s=2} \frac{(2i\tau \sum^{m}_{k=1} t_k)^s}{s} \int^{1}_0 \hdots \int^{1}_0 |x_1 - x_2 |^{\alpha-2}\hdots|x_s - x_1 |^{\alpha-2}dx_1 \hdots dx_s  \Big\}.
\end{equation}
\end{proposition}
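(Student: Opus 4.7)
The plan is to work directly with the cumulant-style expansion of the characteristic function in \eqref{e:char_z_n} and prove, for each fixed $s \geq 2$, the single-term limit
$$
\eta^{-s}(n) \sum_{l=1}^{mn} \lambda_{l,mn}^s \;\longrightarrow\; \tau^s\, c_s\, \Big( \sum_{k=1}^m t_k\Big)^s, \quad n\to\infty,
$$
and then justify interchanging $\lim_n$ with the infinite series in $s$ inside the exponential. Granted these two steps, the expression \eqref{e:char_z_n} converges to the right-hand side of \eqref{e:3/4<H<1}, which is the claim.

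For the per-$s$ limit, substitute the trace expansion \eqref{eqn:trace}. For each tuple $(k_1,\dots,k_s) \in \{1,\dots,m\}^s$, partition the inner multi-sum $\sum_{i_1,\dots,i_s=1}^{n}$ into the \emph{near-diagonal} part, in which at least one cyclic difference $|i_j - i_{j+1}|$ (with $i_{s+1}\equiv i_1$) is at most $h$, and the \emph{off-diagonal} part, in which every cyclic difference exceeds $h$. Lemma \ref{lem:3/4<H<1_neardiag_h} kills the near-diagonal contribution, while Lemma \ref{lem:3/4<H<1_offdiag_h} identifies the off-diagonal limit as $\tau^s c_s$, independently of the indices $(k_1,\dots,k_s)$. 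Summing $t_{k_1}\cdots t_{k_s}$ over $(k_1,\dots,k_s)$ then produces exactly $\tau^s c_s (\sum_k t_k)^s$.

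The main obstacle is the second step: exchanging $\lim_n$ with $\sum_{s=2}^{\infty}$. The plan is to restrict first to $\mathbf{t}$ in a sufficiently small neighborhood $U$ of the origin and produce a uniform-in-$n$ domination via the spectral bound
$$
\eta^{-s}(n)\Big|\sum_{l=1}^{mn} \lambda_{l,mn}^s\Big| \leq \Big(\eta^{-1}(n)\max_{l}|\lambda_{l,mn}|\Big)^{s-2} \cdot \eta^{-2}(n)\sum_{l=1}^{mn}\lambda_{l,mn}^2.
$$
The $s=2$ factor on the right is bounded uniformly in $n$ via Lemmas \ref{lem:3/4<H<1_neardiag_h} and \ref{lem:3/4<H<1_offdiag_h} applied with $s=2$. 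To control $\eta^{-1}(n)\max_l |\lambda_{l,mn}|$ by the operator norm $\eta^{-1}(n)\|R_m(n) D_m(n)\|$, use the tail decay \eqref{e:gamma=z2H-2_hk_hl_const+resid} of $\gamma_h(z)_{k_1,k_2}$ from Proposition \ref{prop:assumption_regu_shortmemo}: since $\alpha-2 > -1$, summing these entries over a row gives $O(h^2 n^{\alpha-1})$, and dividing by $\zeta(h)=h^2$ and $\eta(n)=n^{\alpha-1}$ yields an $O(1)$ bound independent of $n$. Shrinking $U$ so that the resulting geometric bound has ratio below $1$ allows termwise dominated convergence inside the sum $\sum_{s\geq 2}$. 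Finally, extend from $\mathbf{t}\in U$ to all $\mathbf{t}\in\mathbb{R}^m$ by L\'evy's continuity theorem together with the fact that both sides of the putative identity are characteristic functions, so agreement on any neighborhood of the origin is enough to identify the limiting law.
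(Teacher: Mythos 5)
Your proposal follows the paper's own route: the per-$s$ cumulant limit is obtained exactly as in the paper by splitting the trace expansion \eqref{eqn:trace} into the near-diagonal part (killed by Lemma \ref{lem:3/4<H<1_neardiag_h}) and the off-diagonal part (identified by Lemma \ref{lem:3/4<H<1_offdiag_h}), and then summing $t_{k_1}\cdots t_{k_s}$ over the lag indices. The only difference is that you additionally justify exchanging $\lim_n$ with the series in $s$ via a geometric domination of $\eta^{-s}(n)\sum_{l}\lambda_{l,mn}^{s}$ for $\mathbf{t}$ near the origin (using the row-sum bound on $R_m(n)D_m(n)$ and the finite $s=2$ term), a step the paper's proof leaves implicit; this is a sound addition in the spirit of Rosenblatt's original argument, with the extension to all $\mathbf{t}$ legitimate here because the limiting characteristic function is analytic near the origin by \eqref{e:cs_finite}.
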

\begin{proof}
Let $s \geq 2$ and consider the expression \eqref{eqn:trace}. By Lemmas \ref{lem:3/4<H<1_neardiag_h} and \ref{lem:3/4<H<1_offdiag_h}, as $n\rightarrow \infty$ the right-hand side of the latter converges to
$$
\sum_{k_1,...,k_s=1}^{m} t_{k_1}t_{k_2} \cdots t_{k_s}\tau^s \int^{1}_0 \hdots \int^{1}_0|x_1 - x_2 |^{\alpha-2}\hdots|x_s - x_1 |^{\alpha-2}dx_1 \hdots dx_s
$$
$$
= \Big( \sum^{m}_{k=1}t_k \Big)^s\tau^s \int^{1}_0 \hdots \int^{1}_0 |x_1 - x_2 |^{\alpha-2}\hdots|x_s - x_1 |^{\alpha-2}dx_1 \hdots dx_s.
$$
Therefore, the characteristic function \eqref{e:char_z_n} converges to \eqref{e:3/4<H<1}, as claimed.
$\Box$\\
\end{proof}

\begin{proposition}\label{prop:0<H<3/4}
For $0 < \alpha \leq 3/2$, suppose the assumptions (A1)--(A2) hold. Let ${\mathbf{Z}_n}=(Z_n(h_1),Z_n(h_2),...Z_n(h_m))^T$ be the random vector in \eqref{e:z_n}. Then, as $n\rightarrow \infty$, ${\mathbf{Z}_n}\overset{d}{\rightarrow}N(\mathbf{0},\Sigma)$, where $\Sigma$ is a $m\times m$ matrix with components
\begin{equation}\label{e:Sigma_k1,k2}
\Sigma_{k_1,k_2}=\left\{
                   \begin{array}{ll}
                   2w_{k_1}^{-\alpha-1/2}w_{k_2}^{-\alpha-1/2} (\frac{C_\alpha}{C_H})^4 \norm{\widehat{G}(y;w_{k_1},w_{k_2})}^2_{L^2(\bbR)}, & 0 < \alpha < 3/2; \\
                    4 \tau^2, & \alpha=3/2,
                   \end{array}
                 \right.
\end{equation}
and $\widehat{G}(y;w_{k_1},w_{k_2})$ is defined by \eqref{e:G_hat}.
\end{proposition}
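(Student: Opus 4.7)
\noindent\textbf{Proof proposal for Proposition \ref{prop:0<H<3/4}.} The plan is to leverage the characteristic function machinery already assembled in the proof of Theorem \ref{t:MSD_asymptotic_dist}. By the computation leading to \eqref{e:char_z_n},
\begin{equation*}
E e^{i{\mathbf t}^T {\mathbf Z}_n} = \exp\Big\{\tfrac{1}{2}\sum_{s=2}^{\infty}\tfrac{(2i)^s}{s}\eta^{-s}(n)\operatorname{tr}[(R_m(n) D_m(n))^s]\Big\},
\end{equation*}
and expanding the trace as in \eqref{eqn:trace} expresses it as a weighted sum indexed by $(k_1,\hdots,k_s) \in \{1,\hdots,m\}^s$ of the $s$-fold covariance products $\gamma_h(i_1-i_2)_{k_1,k_2}\cdots \gamma_h(i_s-i_1)_{k_s,k_1}$. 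The strategy is standard for Gaussian limits: show that only the quadratic term $s=2$ contributes in the limit, and that its value matches $-\tfrac{1}{2}{\mathbf t}^T \Sigma {\mathbf t}$.

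\noindent For the term $s = 2$, after using the symmetry $\gamma_h(i_2-i_1)_{k_2,k_1} = \gamma_h(i_1-i_2)_{k_1,k_2}$ that follows from stationarity of increments, the summand reduces to $\gamma^2_h(i_1-i_2)_{k_1,k_2}$. Hence Lemma \ref{lem:0<H<3/4_s=2} applies and yields, for both ranges $0<\alpha<3/2$ and $\alpha=3/2$,
\begin{equation*}
\tfrac{(2i)^2}{2\cdot 2}\,\eta^{-2}(n)\operatorname{tr}[(R_m(n)D_m(n))^2] \longrightarrow -\tfrac{1}{2}\sum_{k_1,k_2=1}^m t_{k_1} t_{k_2}\,\Sigma_{k_1,k_2},
\end{equation*}
with $\Sigma_{k_1,k_2}$ given by \eqref{e:Sigma_k1,k2} (the factor 2 in front of $\|\widehat{G}\|^2_{L^2}$ and of $\tau^2$ comes precisely from the $1/2$ prefactor in \eqref{e:char_z_n} combined with the $L^2$ limit from Lemma \ref{lem:0<H<3/4_s=2}).

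\noindent For each fixed $s\geq 3$, Lemma \ref{lem:0<H<3/4_neardiag_h} gives the pointwise vanishing of every $(k_1,\hdots,k_s)$-summand, whence the full $s$-th term in the exponent tends to $0$. The remaining issue is to interchange the limit in $n$ with the infinite sum over $s$. I would handle this by a dominated-convergence argument on the series: bound the $s$-th term uniformly in $n$ by a convergent series. Since $\lambda_{l,mn}$ are the eigenvalues of $R_m(n)D_m(n)$ and $\operatorname{tr}[(R_m(n)D_m(n))^s] \leq mn\,\rho(R_m(n)D_m(n))^s$, where $\rho$ denotes spectral radius, I would show that $\eta^{-1}(n)\rho(R_m(n)D_m(n))$ is bounded by a constant $c({\mathbf t})<1/2$ once $\|{\mathbf t}\|$ is small enough; combined with $mn \cdot \eta^{-2}(n)\rho(R_m(n)D_m(n))^2$ being $O(1)$ (which follows essentially from Lemma \ref{lem:0<H<3/4_s=2}), this yields a geometric majorant $\sum_s (2c({\mathbf t}))^s/s$. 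Pointwise convergence of the characteristic functions on a neighborhood of the origin is then enough, by the analyticity-based extension argument noted after the statement of Corollary \ref{t:MSD_asymptotic_coro}, to conclude $\phi_{{\mathbf Z}_n}({\mathbf t})\rightarrow \exp\{-\tfrac{1}{2}{\mathbf t}^T\Sigma{\mathbf t}\}$ for all ${\mathbf t}\in\bbR^m$, proving ${\mathbf Z}_n \stackrel{d}{\rightarrow} N({\mathbf 0},\Sigma)$.

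\noindent The main technical obstacle I anticipate is the spectral-radius bound used in the domination step, particularly at the boundary $\alpha = 3/2$ where the extra logarithmic factor in $\eta(n)$ leaves less slack. If a direct spectral bound proves awkward, an alternative is to invoke the method-of-moments style argument: establish the tightness of ${\mathbf Z}_n$ via the $s=2$ variance bound and then show, using Lemmas \ref{lem:0<H<3/4_neardiag_h} and \ref{lem:0<H<3/4_s=2}, that all limiting joint cumulants of order $\geq 3$ of linear combinations ${\mathbf t}^T {\mathbf Z}_n$ vanish, which by the classical cumulant-criterion for normality identifies the limit as $N({\mathbf 0},\Sigma)$ and sidesteps the need to pass a limit through the infinite series.
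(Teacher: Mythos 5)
Your proposal follows essentially the same route as the paper: expand the log-characteristic function \eqref{e:char_z_n} via the trace identity \eqref{eqn:trace}, use Lemma \ref{lem:0<H<3/4_s=2} to identify the $s=2$ term with $-\tfrac12\mathbf{t}^T\Sigma\mathbf{t}$ (your bookkeeping of the factor $2$ is correct), and use Lemma \ref{lem:0<H<3/4_neardiag_h} to kill each $s\geq 3$ term. The only difference is that you explicitly justify interchanging the $n$-limit with the sum over $s$ (via a spectral-radius domination or a cumulant argument), a step the paper's proof leaves implicit; both of your suggested fixes are sound and do not change the substance of the argument.
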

\begin{proof}
When $0 < \alpha \leq 3/2$, by Lemma \ref{lem:0<H<3/4_neardiag_h} it suffices to consider the term \eqref{eqn:trace} of order $s = 2$. Therefore, by Lemma \ref{lem:0<H<3/4_s=2}, as $n \rightarrow \infty$ the characteristic function \eqref{e:char_z_n} converges to that of a multivariate normal distribution with covariance matrix $\Sigma = (\Sigma_{k_1,k_2})_{k_1,k_2=1,\hdots,m}$ as in \eqref{e:Sigma_k1,k2}. $\Box$\\
\end{proof}

\section{Additional proofs}\label{s:auxiliary_proofs}

This section contains the proofs of Lemmas \ref{lem:3/4<H<1_neardiag_h}--\ref{lem:0<H<3/4_s=2}.\\

For $h_m+1 \leq |z| \leq n$, recall that conditions \eqref{e:gamma=z2H-2_hk_hl_const+resid} and \eqref{e:residual_function_g} can be jointly expressed as
\begin{equation}\label{e:hk_neq_hl}
\gamma_{h}(z)_{k_1,k_2}=w_{k_1} w_{k_2} \abs{z}^{\alpha-2}h^2  \{\tau + g(z,h)_{k_1,k_2} \}, \quad
\abs{g(z,h)_{k_1,k_2}} \leq C \bigg(\frac{h}{|z|}\bigg)^{\delta},
\end{equation}
for a general pair of indices $k_1,k_2 = 1,\hdots,m$ representing shifting lag values. Moreover, by the Cauchy-Schwarz inequality and \eqref{e:gamma=<h^2H_0},
\begin{equation}\label{e:hk_neq_hl_assumption_gamma_h_z<h}
\abs{\gamma_{h}(z)_{k_1,k_2}} \leq C h^{\alpha}, \quad h, z \in \bbZ,
\end{equation}
where $C > 0$ does not depend on $k_1$, $k_2$. In particular, for a single shifting lag value
\begin{equation}\label{e:hk=hl=h(n)}
h_{k_1} = h_{k_2} = h(n)=:h,
\end{equation}
the expressions \eqref{e:hk_neq_hl} and \eqref{e:gamma=<h^2H_0} imply that
\begin{equation}\label{e:gamma_h(z)}
\gamma_h(z) := \gamma_{h}(z)_{1,1}= \abs{z}^{\alpha-2}h^2 \{\tau+g(z,h) \}, \quad \abs{g({z},h )}\leq C \Big(\frac{h}{|z|}\Big)^{\delta}.
\end{equation}
Thus, in the proofs of Lemmas \ref{lem:3/4<H<1_neardiag_h}--\ref{lem:0<H<3/4_s=2} below, we will first establish the statements for a single index (shifting lag value) $m = 1$ and \eqref{e:hk=hl=h(n)}, and then adjust the constants to obtain the general statements for $m > 1$. In the generalization it will always be implicit that where a multiple summation is taken over index ranges of the form $|i_1 - i_2| \geq h + 1$ or $|i_1 - i_2| \leq h $ under $m = 1$, one should substitute $h_m$ for $h$ under $m > 1$.\\

\noindent{\sc Proof of Lemma \ref{lem:3/4<H<1_neardiag_h}}
First assume $m = 1$. We only look at the subcase where the summation is taken over the index set
\begin{equation}\label{e:summ_range_|i1-i2|=<h}
\{|i_1 - i_2| \leq h\} \cap  \{|i_2 - i_3|\geq h+1\} \cap \hdots  \cap \{|i_s - i_1|\geq h+1\},
\end{equation}
since the remaining $2^s - 2$ subcases can be tackled in a similar fashion. By \eqref{e:gamma_h(z)}, we can rewrite the expression of interest as
$$
 \frac{h^{2(s-1)}}{\eta^s(n) \zeta^s(h)} \sum^{n}_{\stackrel{i_1,i_2,\hdots,i_s=1}{|i_1 - i_2|\leq h ,\hspace{0.5mm}|i_2 - i_3| \geq h+1, \hspace{0.5mm}\hdots \hspace{0.5mm}, |i_s - i_1|\geq h+1}} \gamma_h(i_1-i_2)
$$
\begin{equation}\label{e:3/4<H<1_exp_1}
\abs{i_2 - i_3}^{\alpha-2}\hdots \abs{i_s - i_1}^{\alpha-2}
\{ \tau + g( i_2-i_3,h ) \} \hdots \{ \tau + g( i_s-i_1,h ) \},
\end{equation}
where, under the summation sign, the terms of the form $\tau + g(\cdot,\cdot)$ can be uniformly bounded by a constant, and $\gamma_h(i_1-i_2)$ is bounded by $C h^{\alpha}$ (see \eqref{e:hk_neq_hl_assumption_gamma_h_z<h}). Thus, the absolute value of \eqref{e:3/4<H<1_exp_1} is bounded by
$$
C n^{s(1-\alpha)} h^{-2} \hspace{1mm} n^{(s-1)(\alpha-1)}\sum^{n}_{\stackrel{i_1,\hdots,i_s=1, i_1\neq i_2}{|i_1 - i_2|\leq h,|i_2 - i_3|\geq h+1, \hdots, |i_s - i_1|\geq h+1 }} h^{\alpha} \abs{\frac{i_2 - i_3}{n}}^{\alpha-2}\hdots \abs{\frac{i_s - i_1}{n}}^{\alpha-2} \frac{1}{n^{s-1}}
$$
$$
= C \frac{h^{\alpha-2}}{n^{\alpha-1} } \hspace{0.5mm}\sum^{h}_{z= - h}\sum^{n}_{\stackrel{i_2,\hdots,i_s=1}{|i_2 - i_3|\geq h+1, \hdots, |i_s - i_2 + z|\geq h+1 }} \abs{\frac{i_2 - i_3}{n}}^{\alpha-2}\hdots \abs{\frac{i_s - i_2 + z}{n}}^{\alpha-2} \frac{1}{n^{s-1}}
$$
$$
\leq C \Big(\frac{h}{n}\Big)^{\alpha-1} \sum^{n}_{\stackrel{i_2,\hdots,i_s=1}{|i_2 - i_3|\geq h+1, \hdots, |i_s - i_2 + z|\geq h+1 }} \abs{\frac{i_2 - i_3}{n}}^{\alpha-2}\hdots \abs{\frac{i_s - i_2 - \textnormal{sign}(i_s - i_2) h}{n}}^{\alpha-2} \frac{1}{n^{s-1}}
$$
$$
\sim C \Big(\frac{h}{n}\Big)^{\alpha-1}  \int^{1}_0 \hdots \int^{1}_0 |x_2 - x_3|^{\alpha -2 }\hdots |x_s - x_2|^{\alpha -2 } dx_2 \hdots dx_{s},
$$
which goes to zero as $n \rightarrow \infty$, since $\alpha > 3/2$ and by \eqref{e:h(n)}. This shows \eqref{e:0<H<3/4_sum_gamma} for $m = 1$. In addition, adjusting for the constants $w_{k_1}$, $w_{k_2}$ from \eqref{e:hk_neq_hl} does not alter the zero limit. Hence, \eqref{e:0<H<3/4_sum_gamma} also holds for $m > 1$. $\Box$\\


\noindent{\sc Proof of Lemma \ref{lem:3/4<H<1_offdiag_h}}
First assume $m = 1$. We start out by establishing that
$$
\sum^{n}_{\stackrel{i_1,\hdots,i_s=1}{|i_1 - i_2|\geq h+1,\hdots,|i_s - i_1|\geq h+1}} \Big| \frac{i_1 - i_2}{n}\Big|^{\alpha-2}\Big| \frac{i_2 - i_3}{n}\Big|^{\alpha-2}\hdots \Big| \frac{i_s - i_1}{n}\Big|^{\alpha-2} \frac{1}{n^s}
$$
\begin{equation}\label{e:mult_sum_goes_to_mult_integ_3/4<H<1}
\rightarrow \int^{1}_{0}\int^{1}_{0} \hdots \int^{1}_{0} |x_1 - x_2|^{\alpha-2}|x_2 - x_3|^{\alpha-2}\hdots |x_{s} - x_{1}|^{\alpha-2}dx_1 dx_2 \hdots dx_{s}, \quad n \rightarrow \infty.
\end{equation}
Indeed, since
$$
\sum^{n}_{\stackrel{i_1,\hdots,i_s=1}{i_1\neq i_2,i_2 \neq i_3,\hdots,i_s\neq i_1}} \Big| \frac{i_1 - i_2}{n}\Big|^{\alpha-2}\Big| \frac{i_2 - i_3}{n}\Big|^{\alpha-2}\hdots \Big| \frac{i_s - i_1}{n}\Big|^{\alpha-2} \frac{1}{n^s}
$$
\begin{equation}\label{e:sum_ik1_neq_ik2}
\rightarrow \int^{1}_{0}\int^{1}_{0} \hdots \int^{1}_{0} |x_1 - x_2|^{\alpha-2}|x_2 - x_3|^{\alpha-2}\hdots |x_{s} - x_{1}|^{\alpha-2}dx_1 dx_2 \hdots dx_{s}, \quad n \rightarrow \infty,
\end{equation}
and the sum on the left-hand side of \eqref{e:sum_ik1_neq_ik2} can be broken up into
$$
\Big\{ \sum^{n}_{\stackrel{i_1,\hdots,i_s=1}{|i_1 - i_2|\geq h+1,\hdots,|i_s - i_1|\geq h+1}} + \sum^{n}_{\stackrel{i_1,\hdots,i_s=1}{\{|i_1 - i_2|\geq h+1,\hdots,|i_s - i_1|\geq h+1}\}^c}\Big\} \Big| \frac{i_1 - i_2}{n}\Big|^{\alpha-2}\Big| \frac{i_2 - i_3}{n}\Big|^{\alpha-2}
$$
\begin{equation}\label{e:break_up_full_sum}
\hdots \Big| \frac{i_s - i_1}{n}\Big|^{\alpha-2} \frac{1}{n^s},
\end{equation}
then it suffices to show that the second summation term in \eqref{e:break_up_full_sum} goes to zero. However, the latter can be established by a similar argument to that in the proof of Lemma \ref{lem:3/4<H<1_neardiag_h}. Thus, \eqref{e:mult_sum_goes_to_mult_integ_3/4<H<1} holds.

Based on \eqref{e:gamma_h(z)}, recast the left-hand side of \eqref{e:mult_sum_gamma_goes_to_mult_integ_3/4<H<1} as
\begin{equation}\label{e:main_expression_large_indices}
\frac{h^{2s} }{\eta^s(n)\zeta^s(h)} \hspace{1mm} \sum^{n}_{\stackrel{i_1,i_2,\hdots,i_s =1}{|i_1 - i_2|\geq h+1,\hdots,|i_s - i_1|\geq h+1}}
|i_1 - i_2|^{\alpha-2}\{ \tau + g(i_1 - i_2,h)\}\hdots |i_s - i_1|^{\alpha-2}\{ \tau + g(i_s - i_1,h)\}.
\end{equation}
In view of \eqref{e:mult_sum_goes_to_mult_integ_3/4<H<1}, we only need to show that the remaining terms involving at least one residual function $g$ in \eqref{e:main_expression_large_indices} go to zero. Pick a number $\rho$ in the interval $(0, \min(\delta,\alpha-3/2))$. By \eqref{e:gamma_h(z)}, $\abs{g(h/z)}\leq C(h/z)^{\delta}\leq C(h/z)^{\rho}$, $z \geq h + 1$. Therefore,
$$
\frac{1}{n^{s(\alpha-1)}}\sum^{n}_{\stackrel{i_1,i_2,\hdots,i_s =1}{|i_1 - i_2|\geq h+1,\hdots,|i_s - i_1|\geq h+1}} |i_1 - i_2|^{\alpha-2} \hdots |i_s - i_1|^{\alpha-2} g( i_s - i_1,h)
$$
$$
\leq \frac{C}{n^{s(\alpha-1)}}\sum^{n}_{\stackrel{i_1,i_2,\hdots,i_s =1}{|i_1 - i_2|\geq h+1,\hdots,|i_s - i_1|\geq h+1}} |i_1 - i_2|^{\alpha-2} \hdots |i_s - i_1|^{\alpha-2} \abs{\frac{h}{i_s - i_1}}^{\rho}
$$
$$
= C \Big(\frac{h}{n}\Big)^{\rho}\sum^{n}_{\stackrel{i_1,i_2,\hdots,i_s =1}{|i_1 - i_2|\geq h+1,\hdots,|i_s - i_1|\geq h+1}} {\abs{\frac{i_1 - i_2}{n}}}^{\alpha-2} \hdots {\abs{\frac{i_{s-1} - i_s}{n}}}^{\alpha-2} {\abs{\frac{i_s - i_1}{n}}}^{\alpha-2-\rho} \frac{1}{n^s}
$$
\begin{equation}\label{e:3/4<H<1_one_g}
\sim C \Big(\frac{h}{n}\Big)^{\rho} \int^{1}_0  \hdots \int^{1}_0\abs{x_1 - x_2}^{\alpha-2} \hdots \abs{x_{s-1} - x_s}^{\alpha-2} \abs{x_s - x_1}^{\alpha-2-\rho} dx_1 \hdots dx_s \rightarrow 0,
\end{equation}
as $n\rightarrow\infty$. The limit in \eqref{e:3/4<H<1_one_g} is a consequence of \eqref{e:h(n)} and of the fact that the multiple integral is finite by the same argument as in Remark \ref{r:cs_is_finite}. This establishes \eqref{e:mult_sum_gamma_goes_to_mult_integ_3/4<H<1} under \eqref{e:gamma_h(z)}.

For $m > 1$, by \eqref{e:gamma=z2H-2_hk_hl_const+resid} and \eqref{e:eta_zeta} the constants $w_{k}$, $k=1,\hdots,m$, in \eqref{e:main_expression_large_indices} cancel out. Moreover, by \eqref{e:hk_neq_hl} and \eqref{e:hk_neq_hl_assumption_gamma_h_z<h}, the zero limit in \eqref{e:3/4<H<1_one_g} still holds; consequently, so does the limit \eqref{e:mult_sum_gamma_goes_to_mult_integ_3/4<H<1}. $\Box$\\

\noindent{\sc Proof of Lemma \ref{lem:0<H<3/4_neardiag_h}}
For $m = 1$, rewrite the sum in \eqref{e:0<H<3/4_neardiag_h_original_sum} as
\begin{equation}\label{e:0<H<3/4_neardiag_h}
\eta^{-s}(n)\zeta^{-s}(h)\Big\{ \sum_{\underset{\abs{i_1-i_2}\leq h\cup...\cup\abs{i_s-i_1}\leq h}{i_1,...,i_s=1}}^{n} + \sum_{\underset{\abs{i_1-i_2}\geq h+1,...,\abs{i_s-i_1}\geq h+1 }{i_1,...,i_s=1}}^{n}\Big\}
    \gamma_h({i_1-i_2})\cdots\gamma_h({i_s-i_1}).
\end{equation}
We will show that both multiple summation terms go to zero. We first show this over the index range $\abs{i_1-i_2}\leq h\cup...\cup\abs{i_s-i_1}\leq h$; moreover, as in the proof of Lemma \ref{lem:3/4<H<1_neardiag_h}, we will only consider the index set \eqref{e:summ_range_|i1-i2|=<h}.

Fix the parameter range $0 < \alpha < 3/2$. By \eqref{e:gamma_h(z)}, \eqref{e:hk_neq_hl_assumption_gamma_h_z<h} and the Cauchy-Schwarz inequality, the expression \eqref{e:3/4<H<1_exp_1} is bounded in absolute value by
$$
C \frac{h^{2(s-1)}}{n^{s/2}h^{s(\alpha+1/2)}}  \sum^{n}_{\stackrel{i_1,\hdots,i_s=1, i_1\neq i_2}{|i_1 - i_2|\leq h,|i_2 - i_3|\geq h+1, \hdots, |i_s - i_1|\geq h+1 }} h^{\alpha} \abs{i_2 - i_3}^{\alpha-2}\hdots \abs{i_s - i_1}^{\alpha-2}
$$
$$
\leq C \frac{h^{(2-\alpha)(s-1) - s/2}}{n^{s/2}}   \sum^{n}_{\stackrel{i_1,\hdots,i_{s-1}=1, i_1\neq i_2}{|i_1 - i_2|\leq h,|i_2 - i_3|\geq h+1, \hdots, |i_{s-2} - i_{s-1}|\geq h+1 }} \abs{i_2 - i_3}^{\alpha-2}\hdots \abs{i_{s-2} - i_{s-1}}^{\alpha-2}
$$
$$
\Big(\sum^{n}_{\stackrel{i_{s}=1}{|i_{s-1} - i_s|\geq h+1}}\abs{i_{s-1} - i_s}^{2(\alpha-2)}\Big)^{1/2}  \Big(\sum^{n}_{\stackrel{i_{s}=1}{|i_{s} - i_{1}|\geq h+1}}\abs{i_s - i_1}^{2(\alpha-2)}\Big)^{1/2}
$$
$$
\leq C \frac{h^{(2-\alpha)(s-1) - s/2}}{n^{s/2}}  \Big(\sum^{n}_{z=h}z^{2\alpha - 4}\Big)\sum^{n}_{\stackrel{i_1,\hdots,i_{s-1}=1, i_1\neq i_2}{|i_1 - i_2|\leq h,|i_2 - i_3|\geq h+1, \hdots, |i_{s-2} - i_{s-1}|\geq h+1 }} \abs{i_2 - i_3}^{\alpha-2}\hdots \abs{i_{s-2} - i_{s-1}}^{\alpha-2}
$$
\begin{equation}\label{e:overall_sum_under_one_restriction}
\leq C \frac{h^{(2-\alpha)(s-1) - s/2}}{n^{s/2}} h^{2\alpha - 3}\Big(\sum^{n}_{z=h}z^{\alpha - 2}\Big)^{s-3} (nh)
\leq C \frac{h^{(s-2)(3/2 - \alpha) + (\alpha-1)}}{n^{s/2-1}}\Big(\sum^{n}_{z=h}z^{\alpha - 2}\Big)^{s-3}.
\end{equation}
In the subranges $0 < \alpha < 1$, $\alpha = 1$, $1 < \alpha < 3/2$, \eqref{e:overall_sum_under_one_restriction} is bounded, respectively, by the expressions
$C(\frac{h}{n})^{s/2-1}$,
$$
C\Big(\frac{h}{n}\Big)^{s/2-1} \log^{s-3}(n) = \Big(\frac{h \log^2( n)}{n}\Big)^{s/2-1} \frac{1}{\log(n)},
$$
and $C(\frac{h}{n})^{(s-2)(3/2 - \alpha) + (\alpha-1)}$, all of which converge to zero as $n \rightarrow \infty$ under \eqref{e:h(n)} for $s \geq 3$.

Next consider the case $\alpha = 3/2$. By a simple adaptation of the procedure leading to \eqref{e:overall_sum_under_one_restriction}, we arrive at the bound
$$
C \frac{h^{1/2}}{n^{s/2-1}\log^{s/2}(n)}\Big(\sum^{n}_{z=h}z^{- 1}\Big)\Big(\sum^{n}_{z=h}z^{- 1/2}\Big)^{s-3} \leq
C \Big(\frac{h}{n}\Big)^{1/2}\frac{1}{\log^{s/2-1}(n)}\rightarrow 0, \quad n \rightarrow \infty.
$$
Therefore, in the parameter range $0 < \alpha \leq 3/2$, by extending the conclusion to the whole summation range of interest,
\begin{equation}\label{e:0<H<3/4_gamma_sum}
\eta^{-s}(n) \zeta^{-s}(h) \sum_{\underset{\abs{i_1-i_2}\leq h\cup...\cup\abs{i_s-i_1}\leq h}{i_1,...,i_s=1}}^{n}
    \gamma_h({i_1-i_2})\cdots\gamma_h({i_s-i_1})\rightarrow 0, \quad n\rightarrow \infty.
\end{equation}

We now show that the multiple summation over the index range $\abs{i_1-i_2}\geq h,...,\abs{i_s-i_1}\geq h$ in \eqref{e:0<H<3/4_neardiag_h} also goes to zero. Starting from the expression \eqref{e:main_expression_large_indices}, by the same argument with the residual function $g$ in the proof of Lemma \ref{lem:3/4<H<1_offdiag_h}, it suffices to consider
$$
\Big( \frac{h^2}{\eta(n) \zeta(h) }\Big)^s \sum^{n}_{\stackrel{i_1,i_2,\hdots,i_s =1}{|i_1 - i_2|\geq h+1, |i_2 - i_3|\geq h+1 , \hdots , |i_s - i_1|\geq h+1}}
|i_1 - i_2|^{\alpha-2}|i_2 - i_3|^{\alpha-2} \hdots |i_s - i_1|^{\alpha-2}.
$$
By Cauchy-Schwarz, this expression is bounded from above by
$$
\frac{h^{2s}}{\eta^s(n)\zeta^s(h)} \sum^{n}_{\stackrel{i_1,i_2,\hdots,i_{s-1}=1}{|i_1 - i_2|\geq h+1,\hdots,|i_{s-2} - i_{s-1}|\geq h+1}} |i_1 - i_2|^{\alpha-2}|i_2 - i_3|^{\alpha-2}\hdots |i_{s-2} - i_{s-1}|^{\alpha-2}
$$
$$
\Big(\sum^{n}_{\stackrel{i_s=1}{|i_{s-1}-i_s|\geq h+1}}|i_{s-1}-i_s|^{2\alpha-4}\Big)^{1/2}
\Big(\sum^{n}_{\stackrel{i_s=1}{|i_{s}-i_1|\geq h+1}}|i_{s}-i_1|^{2\alpha-4}\Big)^{1/2}
$$
\begin{equation}\label{e:bound_overall_sum_only_restrictions_>h+1_after_CS}
\leq \frac{C h^{2s}}{\eta^s(n)\zeta^s(h)} \bigg( \sum_{z=h+1}^{n}z^{2\alpha-4}  \bigg) \sum^{n}_{\stackrel{i_1,i_2,\hdots,i_{s-1}=1}{|i_1 - i_2|\geq h+1,\hdots,|i_{s-2} - i_{s-1}|\geq h+1}}
|i_1 - i_2|^{\alpha-2}|i_2 - i_3|^{\alpha-2}\hdots |i_{s-2} - i_{s-1}|^{\alpha-2}.
\end{equation}
However, the multiple summation term in \eqref{e:bound_overall_sum_only_restrictions_>h+1_after_CS} is bounded by
$$
C \Big\{\sum^{n}_{z=h+1}z^{\alpha-2}\Big\}^{s-3} \sum^{n}_{\stackrel{i_1,i_2=1}{|i_1-i_2|\geq h+1}} |i_1 - i_2|^{\alpha-2}
$$
\begin{equation}\label{e:sum_z_sum_i_bound}
\leq C' \Big\{\sum^{n}_{z=h+1}z^{\alpha-2}\Big\}^{s-3} n \sum^{n}_{z=h+1}\Big( 1 - \frac{z}{n}\Big) z^{\alpha-2} \leq C n \Big\{\sum^{n}_{z=h+1}z^{\alpha-2}\Big\}^{s-2}.
\end{equation}
Therefore, when $\alpha = 3/2$, by \eqref{e:sum_z_sum_i_bound} the expression \eqref{e:bound_overall_sum_only_restrictions_>h+1_after_CS} can be bounded by
$$
C \frac{\log(n) n n^{(s-2)/2}}{n^{s/2}\log^{s/2}(n)}
= C \frac{1}{\log^{s/2-1}(n)} \rightarrow 0, \quad n\rightarrow \infty,
$$
as $n \rightarrow \infty$, since $s \geq 3$ and by \eqref{e:h(n)}.

On the other hand, when $0 < \alpha < 1$, $\alpha = 1$ and $1 < \alpha < 3/2$, the bound for \eqref{e:bound_overall_sum_only_restrictions_>h+1_after_CS} becomes, respectively,
\begin{equation}\label{e:0<H<3/4_limit1}
C \frac{h^{2s}}{h^{s(\alpha+1/2)}} \frac{n}{n^{s/2}} h^{(\alpha-1)(s-2)} h^{2\alpha-3} =C\Big(\frac{h}{n}\Big)^{s/2-1} \rightarrow 0,
\end{equation}
\begin{equation}\label{e:0<H<3/4_limit2}
C \frac{h^{2s}}{h^{3s/2}} \frac{n}{n^{s/2}}  \log^{s-2}(n) h^{-1} =C\Big(\frac{h\log^2(n)}{n}\Big)^{s/2-1} \rightarrow 0,
\end{equation}
and
\begin{equation}\label{e:0<H<3/4_limit3}
C \frac{h^{2s}}{h^{s(\alpha+1/2)}} \frac{n}{n^{s/2}} n^{(\alpha-1)(s-2)} h^{2\alpha-3} =C\Big(\frac{h}{n}\Big)^{(s-2)(3/2-\alpha)} \rightarrow 0,
\end{equation}
as $n \rightarrow \infty$. These three limits hold because $s \geq 3$ and by \eqref{e:h(n)}. Thus, the expressions \eqref{e:0<H<3/4_gamma_sum}, \eqref{e:0<H<3/4_limit1}, \eqref{e:0<H<3/4_limit2}, and \eqref{e:0<H<3/4_limit3} yield \eqref{e:0<H<3/4_neardiag_h_original_sum} for $m = 1$.

For $m > 1$, by \eqref{e:hk_neq_hl} and \eqref{e:hk_neq_hl_assumption_gamma_h_z<h} the zero limits in \eqref{e:0<H<3/4_gamma_sum}, \eqref{e:0<H<3/4_limit1}, \eqref{e:0<H<3/4_limit2}, and \eqref{e:0<H<3/4_limit3} still hold; consequently, so does \eqref{e:0<H<3/4_neardiag_h_original_sum}. $\Box$\\

\noindent{\sc Proof of Lemma \ref{lem:0<H<3/4_s=2}} We begin by showing ($i$) for $m = 1$. Rewrite
\begin{equation}\label{e:0<H<3/4_sum_gamma2}
\eta^{-2}(n)\zeta^{-2}(h)\sum_{i_1,i_2=1}^{n} \gamma_{h}^2(i_1-i_2)
= \sum_{z=-n+1}^{n-1} \bigg(1-\frac{\abs{z}}{n}\bigg) (h^{-\alpha}\gamma_{h}(z))^2 \frac{1}{h}.
\end{equation}
As $n\rightarrow \infty$, the summand in \eqref{e:0<H<3/4_sum_gamma2} goes to, and is also bounded by, $(h^{-\alpha}\gamma_{h}(z))^2 \frac{1}{h}$. Therefore, if we can show that
\begin{equation}\label{e:0<alpha<3/2_sum_target_expression}
\sum_{z=-n+1}^{n-1}  (h^{-\alpha}\gamma_{h}(z))^2 \frac{1}{h} \rightarrow \Big(\frac{C_{\alpha}}{C_H}\Big)^4  \|\widehat{G}(x)\|^2_{L^2(\bbR)}, \quad n \rightarrow \infty,
\end{equation}
then \eqref{e:0<H<3/4_s=2} is obtained as a consequence of the dominated convergence theorem. Indeed, by setting $w_k=w_l=1$ and making the change of variables $hx = y$ in the relation \eqref{e:gamma_kl/h^alpha},
\begin{equation}\label{e:deviation_gamma(z)/h^alpha-integral}
    \abs{\frac{\gamma_{h}(z)}{h^{\alpha}} - \Big(\frac{C_{\alpha}}{C_{H}}\Big)^2 C^2_H \int_{\bbR} e^{iy z/h}
    \frac{\abs{e^{iy}-1}^2}{\abs{y}^{\alpha+1}} dy }\leq C h^{-\delta},\quad h,z\in\bbZ,\, \quad h\geq \varepsilon_0^{-2}.
\end{equation}
Therefore,
\begin{equation}\label{e:h^(-alpha)gammah(z)}
h^{-\alpha}\gamma_{h}(z) = \Big(\frac{C_{\alpha}}{C_{H}}\Big)^2 {G}_{H}\Big(\frac{z}{h}\Big)+O(h^{-\delta}),
\end{equation}
where $G_{H}$ denotes the covariance function of a standard fractional Gaussian noise (fGn) $Y(t) = B_{H}(t) - B_{H}(t-1)$, $t \in \bbR$, i.e.,
$$
G_{H}(z):= EY(t)Y(t+z) = \frac{\abs{1+z}^{2H}-2\abs{z}^{2H}+ \abs{1-z}^{2H} }{2}, \quad z \in \bbR.
$$
So, recast the expression on the left-hand side of \eqref{e:0<alpha<3/2_sum_target_expression} as
\begin{equation}\label{e:0<alpha<3/2_sum_squares_cov}
\Big(\frac{C_{\alpha}}{C_H}\Big)^4  \sum_{z=-n+1}^{n-1} G^2_{H}\bigg(\frac{z}{h}\bigg) \frac{1}{h}
+ C \hspace{0.5mm}O\bigg( \frac{1}{h^{1+\delta}}\bigg)\sum_{z=-n+1}^{n-1}  G_{H}\bigg(\frac{z}{h}\bigg)  + o(1),
\end{equation}
where the vanishing term $o(1)$ is a consequence of \eqref{e:h(n)}. Since $G_H(z)\in L^{2}(\bbR)$ for $0 < H< 3/4$ ($0 < \alpha < 3/2$; see \eqref{e:alpha}), the first summation on the right-hand side of \eqref{e:0<alpha<3/2_sum_squares_cov} converges to
\begin{equation}\label{e:0<alpha<3/2_sum_cov_limiting_constant}
 \Big(\frac{C_{\alpha}}{C_H}\Big)^4 \int_{\bbR} G^2_{H}(z) dz = \Big(\frac{C_{\alpha}}{C_H}\Big)^4 \int_{\bbR} |\widehat{G}(x)|^2 dx.
\end{equation}
The equality in \eqref{e:0<alpha<3/2_sum_cov_limiting_constant} is a consequence of Parseval's theorem based on the inverse Fourier transform $f(z) = (2 \pi)^{-1/2} \int_{\bbR} e^{izx}\widehat{f}(x) dx$, $f \in L^2(\bbR)$. Moreover,
\begin{equation}\label{e:(1/h^(1+delta))sum_G(z/h)}
O\Big(\frac{1}{h^{1 + \delta}}\Big) \sum^{n-1}_{z = -n+1}G_H\Big( \frac{z}{h}\Big) \rightarrow 0, \quad n \rightarrow \infty,
\end{equation}
since the function $G_H(\cdot)$ is bounded and by \eqref{e:h(n)}. So, by the expressions \eqref{e:0<alpha<3/2_sum_squares_cov}, \eqref{e:0<alpha<3/2_sum_cov_limiting_constant} and \eqref{e:(1/h^(1+delta))sum_G(z/h)}, we obtain \eqref{e:0<alpha<3/2_sum_target_expression}, and hence \eqref{e:0<H<3/4_s=2}, for $m = 1$.

For $m > 1$, essentially the same argument can be used, and we simply indicate the minor changes. The expression \eqref{e:0<H<3/4_sum_gamma2} must be replaced by
$$
\eta^{-2}(n)\zeta^{-1}(h_{k_1})\zeta^{-1}(h_{k_2})\sum_{i_1,i_2=1}^{n} \gamma_{h}^2(i_1-i_2)_{k_1,k_2}
= \frac{1}{(w_{k_1}w_{k_2})^{\alpha+1/2}}\sum_{z=-n+1}^{n-1} \bigg(1-\frac{\abs{z}}{n}\bigg) (h^{-\alpha}\gamma_{h}(z)_{k_1,k_2})^2 \frac{1}{h}.
$$
In addition, in expression \eqref{e:deviation_gamma(z)/h^alpha-integral} one should substitute $C^{2}_{H}\int_{\bbR} e^{iyz/h} (e^{iw_{k_1}y}-1)(e^{iw_{k_2}y}-1) |y|^{-(\alpha + 1)}dy$ for the integral $C^{2}_{H}\int_{\bbR} e^{iyz/h} |e^{iy}-1|^2 |y|^{-(\alpha + 1)}dy$, where the former can be reinterpreted as the covariance between the increments $B_{H}(t) - B_{H}(t-w_{k_1})$ and $B_{H}(t') - B_{H}(t'-w_{k_2})$, $t -t' = \frac{z}{h}$, of a standard fBm $B_H$. The rest of the argument can be applied in the same way to eventually arrive at the limit \eqref{e:0<alpha<3/2_sum_target_expression} with $\norm{\widehat{G}(y;w_{k_1},w_{k_2})}^2_{L^2(\bbR)}$ in place of $\|\widehat{G}(x)\|^2_{L^2(\bbR)}$. Thus, \eqref{e:0<H<3/4_s=2} holds also for $m > 1$. \\

To show ($ii$) for $m =1$, note that we can apply \eqref{e:hk_neq_hl_assumption_gamma_h_z<h} with $\alpha = 3/2$ in the summation range $|i_1 - i_2|\leq h$ to obtain
\begin{equation}\label{e:double_sum_cov2_alpha=3/2_restricted_range}
n^{-1}\log^{-1}(n) h^{-4}\sum^{n}_{\stackrel{i_1,i_2 =1}{|i_1 - i_2|\leq h}} \gamma^2_h(i_1 - i_2) \leq \frac{C}{\log(n)} \rightarrow 0,
\end{equation}
by \eqref{e:h(n)}. Alternatively, in the summation range $\abs{i_1-i_2}\geq h+1$, by \eqref{e:gamma=z2H-2_hk_hl_const+resid} we have
$$
\zeta^{-2}(h)\eta^{-2}(n) \sum_{\underset{\abs{i_1-i_2}\geq h+1}{i_1,i_2=1}}^{n} \gamma_h^2({i_1-i_2})
= 2 n^{-1} \log^{-1}(n) h^{-4} \sum_{z=h+1}^{n-1} n\Big(1-\frac{z}{n}  \Big)z^{-1} h^4 \{\tau+g(z,h)\}^2
$$
$$
\sim 2 \log^{-1}(n) \sum_{z=h+1}^{n-1} z^{-1} \{\tau+g(z,h)\}^2
$$
\begin{equation}\label{eqn:0<h_leq_3/4_z>h_0_2}
=  2\log^{-1}(n) \Big\{\sum_{z=h+1}^{n-1} z^{-1} \tau^2 + \sum_{z=h+1}^{n-1} z^{-1}g^2(z,h)
+ 2 \sum_{z=h+1}^{n-1} z^{-1} \tau g(z,h) \Big\}.
\end{equation}
Note that for $\beta > 0$ and large enough $n$, $\int^{n}_{h+1} z^{-\beta}dz \leq  \sum^{n-1}_{z= h+1}z^{-\beta}  \leq \int^{n}_{h+1} (z-1)^{-\beta}dz$. Consequently, if $\beta=1$,
$$
\frac{\log (n)-\log(h+1)}{\log(n)} \leq \frac{\sum^{n-1}_{z= h+1}z^{-1}}{\log(n)} \leq \frac{\log (n-1)-\log(h)}{\log(n)}.
$$
Thus, the left summation term in \eqref{eqn:0<h_leq_3/4_z>h_0_2} goes to $2\tau^2$ as $n\rightarrow \infty$. We now show that the remaining two terms in \eqref{eqn:0<h_leq_3/4_z>h_0_2} go to zero with $n$. It also suffices to look at the third term in \eqref{eqn:0<h_leq_3/4_z>h_0_2}, because a similar approach can be used with the second term. Indeed, the former can be bounded by
$$
\abs{ \frac{C}{\log(n)} \sum_{z=h+1}^{n-1} z^{-1} \tau g(z,h) }
\leq \frac{C'}{\log(n)}  \sum^{n-1}_{z=h+1} \Big(\frac{z}{h}\Big)^{-(1+\delta)} \frac{1}{h}
\leq \frac{C'}{\log(n)} \int_{1}^{\infty} x^{-(1+\delta)}dx \rightarrow 0,
$$
as $n \rightarrow \infty$. Together with \eqref{e:double_sum_cov2_alpha=3/2_restricted_range}, this establishes \eqref{e:H=3/4_s=2} for $m =1$.

For $m > 1$, by \eqref{e:gamma=z2H-2_hk_hl_const+resid} and \eqref{e:eta_zeta} the constants $w_{k}$, $k=1,\hdots,m$, in \eqref{eqn:0<h_leq_3/4_z>h_0_2} cancel out. Moreover, by \eqref{e:hk_neq_hl} and \eqref{e:hk_neq_hl_assumption_gamma_h_z<h}, the zero limits in \eqref{e:double_sum_cov2_alpha=3/2_restricted_range} and in \eqref{eqn:0<h_leq_3/4_z>h_0_2} (for the second and third terms) still hold; consequently, so does the limit \eqref{e:H=3/4_s=2}. $\Box$\\

\bibliography{MSD_asympt_Gaussian_si}

\small

\bigskip

\noindent \begin{tabular}{l}
Gustavo Didier and Kui Zhang\\
Mathematics Department\\
Tulane University  \\
6823 St.\ Charles Avenue\\
New Orleans, LA 70118, USA\\
\textit{gdidier@tulane.edu}, \hspace{1mm}\textit{kzhang3@tulane.edu} \\
\end{tabular}\\

\smallskip

\end{document}